\documentclass{amsart}
\usepackage[T1]{fontenc}
\usepackage{textcomp}
\usepackage[utf8x]{inputenc}
\usepackage[swedish,english]{babel}
\usepackage{ucs}
\usepackage{graphicx}
\usepackage{mathrsfs}
\usepackage{rotating} 
\usepackage{tikz}
\usepackage{tikz-cd}
\usepackage{amssymb}
\usepackage[titletoc,toc]{appendix}
\numberwithin{equation}{section} 

\DeclareFontFamily{U}{BOONDOX-calo}{\skewchar\font=45 }
\DeclareFontShape{U}{BOONDOX-calo}{m}{n}{
  <-> s*[1.05] BOONDOX-r-calo}{}
\DeclareFontShape{U}{BOONDOX-calo}{b}{n}{
  <-> s*[1.05] BOONDOX-b-calo}{}
\DeclareMathAlphabet{\mathcalboondox}{U}{BOONDOX-calo}{m}{n}
\SetMathAlphabet{\mathcalboondox}{bold}{U}{BOONDOX-calo}{b}{n}
\DeclareMathAlphabet{\mathbcalboondox}{U}{BOONDOX-calo}{b}{n}
\vfuzz2pt 
\hfuzz2pt 
\newtheorem{thm}{Theorem}[section]
\newtheorem{example}{Example}[section]
\newtheorem{cor}[thm]{Corollary}
\newtheorem{lem}[thm]{Lemma}
\newtheorem{prop}[thm]{Proposition}

\theoremstyle{definition}
\newtheorem{definition}[thm]{Definition}
\theoremstyle{remark}
\newtheorem{rem}[thm]{Remark}
\numberwithin{equation}{section}

\makeindex
\begin{document}

\title[{Relations in the Tautological Ring of the Universal Curve}]
{Relations in the Tautological Ring of the Universal Curve}%
\author{Olof Bergvall, olof.bergvall@hig.se}%

\begin{abstract}
 We bound the dimensions of the graded pieces of the tautological ring of the universal
 curve from below for genus up to 27 and from above for genus up to 9.
 As a consequence we obtain the precise structure of the tautological ring of the universal
 curve for genus up to 9.
 In particular, we see that it is Gorenstein for these genera.
 
 \vspace{10pt}
 \noindent MSC-classification: 14H10
\end{abstract}

\maketitle


\section{Introduction}

Chow rings of moduli spaces of curves are very large. However, Mumford \cite{mumford}
observed that one does not need the entire Chow ring in order to make interesting
intersection theoretic computations and solve many enumerative questions - the
tautological subring is enough. Very roughly speaking, this is the subring
generated by the most geometrically interesting classes of the moduli space.

In this note we investigate the tautological ring of the universal curve $\mathcal{C}_g=\mathcal{M}_{g,1}$
by combining an extension of a method of Faber \cite{faber} with results
of Liu and Xu \cite{liuxu2}.
In this way we are able to determine the structure of the tautological
ring of $\mathcal{C}_g$ up to genus $9$ and we determine its Gorenstein quotients
up to genus $27$. In particular, we show that the tautological ring of $\mathcal{C}_g$
is Gorenstein for $2 \leq g \leq 9$, see Theorem~\ref{gorthm}.

The research presented in this note was carried out at KTH in the spring of 2011
but has up to now only been presented in the somewhat obscure form \cite{bergvallmaster}.
Nevertheless, the results have gained some attention, see \cite{morita2}, \cite{yin2}, \cite{yinthesis} and \cite{yin},
and it therefore seems as though they should be presented in a way which is more accessible and
easy to read. We also remark that the methods presented here are directly applicable to
higher fiber powers of $\mathcal{C}_g$ over $\mathcal{M}_g$ and that
similar ideas could plausibly be applied to other moduli spaces of interest.

The paper is structured as follows. In Section~\ref{backgroundsec} we give
the basic definitions and present some of the known results around tautological
rings of moduli spaces of curves. In particular, we sketch a method
for producing tautological relations due to Faber \cite{faber} in Section~\ref{fabersec}.
In Section~\ref{unisec} we make an analogous construction for the tautological ring 
of the universal curve and we also present a result of Liu and Xu \cite{liuxu2} in Section~\ref{computingpgi}
which will be very important. By combining these results we are able to bound
the dimensions of graded pieces of the tautological rings from below for $g \leq 27$
and from above for $g \leq 9$. The precise results are given in Section~\ref{rankof}
and Section~\ref{generating}.

\subsection*{Acknowledgements}
The author thanks Carel Faber who supervised the project and also pointed
out the relation \ref{bequation}. I am also grateful for the many valuable 
comments and suggestions from the anonymous referees.

\pagebreak
\section{Background}
\label{backgroundsec}
Let $k$ be an algebraically closed field and let $g \geq 2$ be an integer.
We let $\mathcal{M}_{g,n}$ denote the moduli space of curves of genus $g$ 
with $n$ points, i.e. of tuples $(C, p_1, \ldots, p_n)$, where $C$ is a smooth curve of
genus $g$ over $k$ and $p_1, \ldots,p_n$ are distinct points
on $C$.
The moduli space $\mathcal{M}_{g,1}$,
is given the symbol $\mathcal{C}_g$ and we denote the
morphism $\mathcal{C}_g \to \mathcal{M}_g$
forgetting the marked point by $\pi$.
The space $\mathcal{C}_g$ is a universal curve over the dense
open subset of $\mathcal{M}_g$ parameterizing curves without automorphisms.
By abuse of terminology, $\mathcal{C}_g$ is therefore sometimes called
the universal curve over $\mathcal{M}_g$.

We denote the $n$-fold fiber product of $\mathcal{C}_g$ over
$\mathcal{M}_g$ by $\mathcal{C}_g^n$.
The space $\mathcal{C}_g^n$ parametrizes smooth curves marked with $n$,
not necessarily distinct, points. For notational convenience we
shall sometimes write $\mathcal{C}_g^1$ to mean $\mathcal{C}_g$ and
$\mathcal{C}_g^0$ to mean $\mathcal{M}_g$.
For $m \geq n$ we have various morphisms $\mathcal{C}_g^{m} \to \mathcal{C}^n_g$
forgetting $m-n$ points. Especially important are the morphisms
\begin{equation*}
 \pi_{n,i} : \mathcal{C}_g^n \to \mathcal{C}_g^{n-1},
\end{equation*}
defined by forgetting the $i$'th point. For $n=1$ we have $\pi=\pi_{1,1}$.

\subsection{Tautological rings}
The spaces $\mathcal{C}_g^n$ have Chow rings $A^{\bullet}(\mathcal{C}_g^n)$ (with rational coefficients).
These rings are however believed to be very big so, following Mumford \cite{mumford}, one instead
chooses to concentrate on a subring generated by the most important
cycles, i.e. the tautological ring. Faber and Pandharipande \cite{faberpandharipande}
has given a very natural and general definition of this system of rings, here
we choose to use Mumford's original definition.

Consider the morphism
\begin{equation*}
\pi : \mathcal{C}_g \to \mathcal{M}_g.
\end{equation*}
Let $\omega_{\pi}$ denote the relative dualizing sheaf, i.e. the sheaf of rational sections of
$\mathrm{Coker}(d \pi: \pi^*\Omega_{\mathcal{M}_g} \to \Omega_{\mathcal{C}_g})$.
Define $K$ to be the first Chern class of $\omega_{\pi}$, i.e.
\begin{equation*}
K= c_1(\omega_{\pi}) \in A^1(\mathcal{C}_g).
\end{equation*}
We use $K$ to define the so-called $\kappa$-classes
\begin{equation*}
\kappa_i = \pi_{*}(K^{i+1}) \in A^i(\mathcal{M}_g).
\end{equation*}
In particular we have $\kappa_{-1}=0$ and $\kappa_0 = 2g-2$.
We may also consider the Hodge bundle
\begin{equation*}
\mathbb{E}= \pi_{*}(\omega_{\pi}).
\end{equation*}
It is a vector bundle of rank $g$ on $\mathcal{M}_g$ whose fiber at $[C] \in \mathcal{M}_g$
is the space of holomorphic differentials on $C$. We
define the $\lambda$-classes as
\begin{equation*}
\lambda_i := c_i \left(\mathbb{E}\right) \in A^i(\mathcal{M}_g).
\end{equation*}
In particular, $\lambda_0 =1$ and $\lambda_i = 0$ if $i > g$.
The $\kappa$- and $\lambda$-classes generate a $\mathbb{Q}$-subalgebra $R^{\bullet}(\mathcal{M}_g)$
of $A^{\bullet}(\mathcal{M}_g)$ called the tautological ring.
By analogy, we introduce the relative dualizing sheaves $\omega_{\pi_{n,i}}$ of 
$\pi_{n,i}: \mathcal{C}_g^n \to \mathcal{C}_g^{n-1}$, the classes
\begin{equation*}
K_i:=c_1\left( \omega_{\pi_{n,i}} \right) \in A^1(\mathcal{C}_g^n)
\end{equation*}
and we also introduce the diagonal classes $D_{i,j}$ consisting of points
\begin{equation*}
[(C,p_1, \ldots, p_n)] \in \mathcal{C}_g^n,
\end{equation*}
such that $p_i = p_j$, $i \neq j$.

By abuse of notation we
shall also denote the pullback of $\kappa_i$ and $\lambda_i$ in $A^{\bullet}(\mathcal{C}_g^n)$
by $\kappa_i$ and $\lambda_i$, respectively. We now define
the tautological ring $R^{\bullet}(\mathcal{C}_g^n)$ of $\mathcal{C}_g^n$
as the subalgebra of $A^{\bullet}(\mathcal{C}_g^n)$ generated by the $K_i$-, $D_{i,j}$-, $\kappa$-
and $\lambda$-classes.

\subsection{Facts}

An early result concerning the tautological ring is
the following theorem of Mumford, \cite{mumford}.

\begin{thm}[Mumford]
\label{mumfordthm1} 
The classes $\lambda_i$ and $\kappa_i$ are polynomials in the classes
$\kappa_1, \ldots, \kappa_{g-2}$.
\end{thm}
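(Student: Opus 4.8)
The statement really contains two assertions: first, that every $\lambda_i$ can be rewritten in terms of the $\kappa$-classes, and second, that the $\kappa$-classes themselves are generated by $\kappa_1, \ldots, \kappa_{g-2}$. The plan is to treat the first by a direct Grothendieck--Riemann--Roch computation and the second by exploiting the relations forced by the fact that $\mathbb{E}$ has rank $g$.

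For the first part I would apply Grothendieck--Riemann--Roch to $\pi \colon \mathcal{C}_g \to \mathcal{M}_g$ and the relative dualizing sheaf $\omega_\pi$. Since $\pi$ has relative dimension one, we have $R^0\pi_* \omega_\pi = \mathbb{E}$ by definition, while relative Serre duality gives $R^1 \pi_* \omega_\pi = \mathcal{O}_{\mathcal{M}_g}$; hence $\pi_{!}\omega_\pi = \mathbb{E} - \mathcal{O}$ in $K$-theory and $\mathrm{ch}(\pi_!\omega_\pi) = \mathrm{ch}(\mathbb{E}) - 1$. The relative tangent sheaf is $\omega_\pi^\vee$, with first Chern class $-K$, so the integrand $\mathrm{ch}(\omega_\pi)\,\mathrm{td}(\omega_\pi^\vee)$ simplifies to $K/(1 - e^{-K})$. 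Expanding this in Bernoulli numbers and pushing forward by means of $\pi_* K^{\,n} = \kappa_{n-1}$ yields
\begin{equation*}
 \mathrm{ch}(\mathbb{E}) = g + \sum_{l \geq 1} \frac{B_{2l}}{(2l)!}\,\kappa_{2l-1}.
\end{equation*}
Since the $\lambda_i = c_i(\mathbb{E})$ are expressible through the $\mathrm{ch}_j(\mathbb{E})$ by Newton's identities, this already shows that every $\lambda_i$ is a polynomial in the $\kappa$-classes.

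For the second part I would read off two families of relations from the same computation. On the one hand $\mathrm{ch}(\mathbb{E})$ involves only odd-indexed $\kappa$-classes, so $\mathrm{ch}_{2l}(\mathbb{E}) = 0$ for $l \geq 1$; equivalently $c(\mathbb{E})c(\mathbb{E}^\vee) = 1$, Mumford's relation, which says that $\log c_t(\mathbb{E})$ is an odd function of $t$. On the other hand $\mathbb{E}$ has rank $g$, so $c_t(\mathbb{E})$ is a polynomial of degree $g$ in $t$. Writing $c_t(\mathbb{E}) = \exp\!\big(\sum_{l\geq 1} a_{2l-1}t^{2l-1}\big)$ with $a_{2l-1}$ proportional to $\kappa_{2l-1}$, the vanishing of the coefficients of $t^{g+1}, t^{g+2}, \ldots$ produces relations that can be solved recursively, expressing the high-index $\kappa$-classes in terms of lower-index ones.

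The delicate point --- and the one I expect to be the main obstacle --- is extracting the sharp bound $g-2$ and handling the even-indexed $\kappa$-classes, which never appear in $\mathrm{ch}(\mathbb{E})$ and are therefore invisible to the relations above taken in isolation. Controlling these, and pushing the recursion all the way down from the naive threshold to $\kappa_{g-2}$, requires using the full system of relations forced by $\operatorname{rank}\mathbb{E} = g$ together with the known vanishing of $R^i(\mathcal{M}_g)$ in high degrees; this bookkeeping, rather than the Riemann--Roch input, is where the real work lies.
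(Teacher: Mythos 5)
The paper offers no proof of this statement---it is quoted as background with a citation to Mumford---so there is nothing internal to compare against and I am judging your argument on its own terms. Your first half is correct and is the standard argument: Grothendieck--Riemann--Roch for $\pi$ and $\omega_{\pi}$ gives $\mathrm{ch}(\mathbb{E}) = g + \sum_{l \geq 1} \tfrac{B_{2l}}{(2l)!}\kappa_{2l-1}$, and Newton's identities then express every $\lambda_i$ as a polynomial in the odd-indexed $\kappa$-classes.

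The second half contains a genuine gap, which you partly acknowledge but do not close, and the tools you propose cannot close it even in principle. Every relation extractable from $c_t(\mathbb{E})$---both Mumford's relation $c(\mathbb{E})c(\mathbb{E}^{\vee})=1$ and the vanishing of the coefficients of $t^{g+1}, t^{g+2}, \ldots$ forced by $\mathrm{rank}\,\mathbb{E}=g$---is a polynomial identity in $\kappa_1, \kappa_3, \kappa_5, \ldots$ alone, and the latter family lives only in degrees $\geq g+1$. Such relations can never express $\kappa_{g-1}$, $\kappa_g$, or any even-indexed $\kappa$-class in terms of lower ones; for $g=3$, for instance, nothing you have written touches $\kappa_2$. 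Appealing to ``the known vanishing of $R^i(\mathcal{M}_g)$ in high degrees'' is not a repair: that is Looijenga's theorem (Theorem~\ref{loovan} here), a much later and harder result, and invoking it would make the $\kappa$-part of the statement trivially true ($\kappa_j=0$ for $j>g-2$) rather than a consequence of your relations. The missing idea is to work on $\mathcal{C}_g$ rather than on $\mathcal{M}_g$: since the canonical system of a smooth curve of genus $\geq 2$ is base-point free, the evaluation map $\pi^*\mathbb{E} \to \omega_{\pi}$ is surjective, so its kernel $S$ is a vector bundle of rank $g-1$ and $c_j(S)=0$ for all $j \geq g$. Expanding
\begin{equation*}
 c_g(S) = \sum_{i=0}^{g} (-1)^{g-i}\,\pi^*\lambda_i\, K^{g-i},
\end{equation*}
multiplying by $K^m$ for $m \geq 0$ and applying $\pi_*$ yields, for every $j = g+m-1 \geq g-1$, a relation
\begin{equation*}
 (-1)^g \kappa_{j} + \sum_{i=1}^{g} (-1)^{g-i}\lambda_i \kappa_{j-i} = 0 ,
\end{equation*}
whose leading term is $\kappa_j$ of either parity. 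Combined with your expression for the $\lambda_i$ in terms of odd $\kappa$-classes, and after the (finite, elementary) check that the coefficient of $\kappa_j$ does not cancel against the linear terms of the $\lambda_i$ in the two lowest cases $j=g-1,g$, descending induction on $j$ expresses all $\kappa_j$ with $j \geq g-1$, and hence all $\lambda_i$, as polynomials in $\kappa_1, \ldots, \kappa_{g-2}$. This is the step your proposal is missing.
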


For instance, we have the following relation between the $\lambda_i$
and the $\kappa_j$
\begin{equation*}
\sum_{i=0}^{\infty} \lambda_it^i = \exp \left( \sum_{i=1}^{\infty} \frac{B_{2i}\kappa_{2i-1}}{2i(2i-1)}t^{2i-1} \right),
\end{equation*}
where the $B_{2i}$ are the signed Bernoulli numbers.

As conjectured by Faber \cite{faber} and proven by Ionel \cite{ionel},
Mumford's result can be improved quite a bit.
In cohomology, the result was first obtained by Morita \cite{morita}.

\begin{thm}[Ionel \cite{ionel}]
\label{ionel1}
The $\lfloor g/3 \rfloor$ classes $\kappa_1, \ldots, \kappa_{\lfloor g/3 \rfloor}$ generate
$R^{\bullet}(\mathcal{M}_g)$,
where $\lfloor x\rfloor$ denotes the floor function of $x$.
\end{thm}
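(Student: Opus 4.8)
The plan is to prove the statement in the equivalent form that $\kappa_d$ is a polynomial in $\kappa_1,\dots,\kappa_{d-1}$ for every $d>\lfloor g/3\rfloor$. Granting this, generation by $\kappa_1,\dots,\kappa_{\lfloor g/3\rfloor}$ follows: the $\lambda_i$ are already polynomials in the $\kappa_j$ by the displayed Mumford relation, so $R^{\bullet}(\mathcal{M}_g)$ is generated by the $\kappa_i$ alone, and a downward induction on $d$ then eliminates each $\kappa_d$ with $d>\lfloor g/3\rfloor$. Note also that Theorem~\ref{mumfordthm1} already disposes of the range $d>g-2$ (indeed $R^d(\mathcal{M}_g)$ is expected to vanish there), so the real content is the window $\lfloor g/3\rfloor<d\le g-2$.

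The mechanism I would use to produce such relations is the one underlying Faber's method: realise $\kappa$-classes as pushforwards of $\psi$-monomials from fibre powers and exploit a vanishing statement upstairs. Concretely, for a choice of positive integers $a_1,\dots,a_n$ with $\sum_i a_i=d$, consider the forgetful map $\pi\colon\mathcal{M}_{g,n}\to\mathcal{M}_g$ and the monomial $\prod_{i=1}^n\psi_i^{a_i+1}$. Using the comparison $\psi_i=\pi^*\psi_i+D_{i,n}$ under each one-point forgetful map together with the projection formula, one computes that
\begin{equation*}
\pi_*\Bigl(\prod_{i=1}^n\psi_i^{a_i+1}\Bigr)=(n-1)!\,\kappa_{d}+(\text{products of }\kappa_{<d}),
\end{equation*}
where the distinguished term $\kappa_d$ comes from the stratum on which all marked points collide and every other term, arising from a coarser collision pattern, is a product of $\kappa$-classes of strictly smaller index. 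Thus any relation asserting that $\prod_i\psi_i^{a_i+1}$ vanishes on the \emph{open} space $\mathcal{M}_{g,n}$ pushes down to a relation of the desired shape $c\,\kappa_d=(\text{polynomial in }\kappa_{<d})$ with $c=(n-1)!\neq 0$.

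The hard part is the vanishing input, and this is where the sharp constant $g/3$ is won or lost. The crude bound — that $R^{j}(\mathcal{M}_g)=0$ for $j>g-2$, so that a pushforward in too high a degree is automatically zero — only reaches $d>g-2$, and even distributing the degree over many marked points does not by itself improve this past roughly $g/2$. To reach $\lfloor g/3\rfloor$ one must invoke Ionel's genuinely stronger statement that a $\psi$-monomial of the appropriate codimension is supported on the boundary of $\overline{\mathcal{M}}_{g,n}$, hence restricts to zero on the interior $\mathcal{M}_{g,n}$; the sharp codimension threshold in that theorem, fed through the collision bookkeeping above, is exactly what yields the exponent $\lfloor g/3\rfloor$. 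I therefore expect the main obstacle to be establishing (or carefully quoting) this sharp vanishing on the open moduli space, together with the ancillary but necessary verification that for the optimal choice of $n$ and of the $a_i$ the coefficient of $\kappa_d$ is nonzero while all remaining terms are genuinely decomposable.
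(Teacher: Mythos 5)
First, note that the paper does not prove this statement at all: Theorem~\ref{ionel1} is quoted as a black box, attributed to Ionel \cite{ionel} (with the cohomological version due to Morita), so there is no in-paper argument to compare yours against. Your reduction to ``$\kappa_d$ is a polynomial in $\kappa_1,\dots,\kappa_{d-1}$ for $d>\lfloor g/3\rfloor$'' is correct, and your pushforward computation
\begin{equation*}
\pi_*\Bigl(\prod_{i=1}^n\psi_i^{a_i+1}\Bigr)=(n-1)!\,\kappa_{d}+(\text{products of }\kappa_{<d}),\qquad a_i\ge 1,\ \textstyle\sum_i a_i=d,
\end{equation*}
is the standard Kaufmann--Manin--Zagier/Arbarello--Cornalba identity and does what you claim. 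The problem is the vanishing input and the arithmetic it imposes. The Getzler--Ionel vanishing (monomials of total degree at least $g$ in $\psi$- and $\kappa$-classes restrict to zero on $\mathcal{M}_{g,n}$) forces $d+n\ge g$, while the requirement that every $a_i\ge 1$ (needed so that all the non-leading terms are genuinely decomposable into lower $\kappa$'s) forces $n\le d$; together these give $d\ge g/2$, as you yourself observe. Your final sentence then asserts that ``the sharp codimension threshold in that theorem, fed through the collision bookkeeping above, is exactly what yields the exponent $\lfloor g/3\rfloor$.'' That is the gap: fed through the bookkeeping you actually set up, the sharp threshold yields $g/2$, not $g/3$, and no choice of $n$ and $a_i$ within your scheme does better. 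The passage from $g/2$ to $g/3$ is precisely the content of Ionel's contribution and requires an additional idea that your proposal does not contain --- in Ionel's argument one must exploit relations coming from mixed monomials in $\psi$- \emph{and} $\kappa$-classes together with a nontrivial induction on the number of $\kappa$-factors, not merely the pure $\psi$-monomial pushforwards above.

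There is also a structural circularity to be aware of: you propose to prove Ionel's theorem by ``carefully quoting'' Ionel's vanishing theorem plus bookkeeping, but since the bookkeeping alone is insufficient, the proposal in effect defers the entire content of the statement to the reference without identifying the step where the factor of $3$ is won. (Two minor points: the vanishing $R^j(\mathcal{M}_g)=0$ for $j>g-2$ is a theorem of Looijenga, Theorem~\ref{loovan} with $n=0$, not merely ``expected''; and what your argument does establish unconditionally is the weaker, previously known improvement of Mumford's bound to roughly $g/2$, which is worth stating as such rather than as a step toward $\lfloor g/3\rfloor$.)
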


By combining the Madsen-Weiss theorem \cite{madsenweiss} and a stability result
of Boldsen \cite{boldsen} (improving results of Harer \cite{harer}) one obtains the
following.

\begin{thm}[Madsen-Weiss \cite{madsenweiss}, Boldsen \cite{boldsen}]
\label{bmwthm}
 There are no relations in $R^i(\mathcal{C}_g^n)$ for $i \leq g/3$. 
\end{thm}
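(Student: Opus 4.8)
The plan is to reduce the statement to the two cited theorems by working in the stable range of cohomology. First I would make the assertion precise. By construction $R^\bullet(\mathcal{C}_g^n)$ is a quotient of the graded-commutative $\mathbb{Q}$-algebra $F$ generated by the classes $\kappa_j$, $\lambda_j$, $K_i$ and $D_{i,j}$ modulo only those identities that hold formally for diagonals and their mutual intersections (and, by Theorem~\ref{mumfordthm1}, the $\lambda_j$ may be eliminated in favour of the $\kappa_j$). Saying that there are no relations in degree $i$ then means that the defining surjection $F \to R^\bullet(\mathcal{C}_g^n)$ is an isomorphism in that degree, so that $R^i(\mathcal{C}_g^n)$ has the maximal dimension permitted by this formal model.

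To detect injectivity I would use the cycle class map $A^i(\mathcal{C}_g^n) \to H^{2i}(\mathcal{C}_g^n)$, which doubles degrees: an algebraic class of Chow degree $i$ lives in cohomological degree $2i$. This doubling is exactly what turns a cohomological stability range of size proportional to $g$ into the bound $g/3$. Concretely, Boldsen's homological stability theorem \cite{boldsen} (sharpening Harer \cite{harer}) asserts that $H^d(\mathcal{C}_g^n)$ agrees with its stable value once $d$ is small compared to $g$, with a range of the form $d \le \tfrac{2}{3}(g-1)$; setting $d=2i$ shows that every Chow degree $i \le g/3$ falls within the stable range.

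The identification of the stable value is the Madsen--Weiss theorem \cite{madsenweiss}: the stable rational cohomology of $\mathcal{M}_g$ is the free polynomial algebra on the $\kappa_j$. Propagating this along the universal curve and its fibre powers (a Leray--Hirsch type argument, due to Looijenga) identifies the stable cohomology of $\mathcal{C}_g^n$ with precisely the formal model $F$ above, the tautological generators mapping to the free generators. Hence in degrees $i \le g/3$ the composite $F \to R^\bullet(\mathcal{C}_g^n) \to H^{2\bullet}(\mathcal{C}_g^n)$ is the canonical identification of $F$ with the stable cohomology, in particular it is injective. A surjection whose composition with a further map is injective is itself injective, so $F \to R^\bullet(\mathcal{C}_g^n)$ is injective in this range; combined with surjectivity this gives $R^i(\mathcal{C}_g^n) \cong F^i$ for $i \le g/3$, which is the claim. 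Note that this sidesteps the unknown injectivity of the cycle class map on the whole tautological ring: injectivity of the \emph{composite} is all that is used, and it is forced by the stable computation.

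The main obstacle is the bookkeeping around the marked points rather than the homotopy theory. One must set up the formal model $F$ so that the diagonal classes $D_{i,j}$ carry exactly their universal relations (symmetry, the self-intersection identity $D_{i,j}^2 = -K_i D_{i,j}$, and the comparison of $D_{i,j}D_{i,k}$ with triple diagonals) and then verify that the fibrewise Madsen--Weiss computation returns this same $F$ and nothing more. One must also pin down the constant in Boldsen's range precisely enough to cover the full interval $i \le g/3$ rather than a slightly shorter one. Both points are settled in the literature, so the essential input is entirely the two cited theorems.
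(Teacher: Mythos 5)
Your proposal follows essentially the same route as the paper: the paper offers no proof of its own beyond stating that the result is obtained ``by combining the Madsen--Weiss theorem and a stability result of Boldsen,'' and your sketch (cycle class map doubling degrees, Boldsen's stability range, the Madsen--Weiss identification of the stable value, and Leray--Hirsch propagation to $\mathcal{C}_g^n$) is precisely that combination spelled out. Your caveat about pinning down the constant in the stability range is apt --- the paper's own remark concedes that Boldsen only claims the result for $i < g/3$ and that the boundary case $i = g/3$ is merely ``well known to experts.''
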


\begin{rem}
 Even though Boldsen only claims the above result for $i < g/3$, the remaining case
 seems well known to experts, see e.g. Ionel \cite{ionel}.
\end{rem}

We thus have a very good understanding of the tautological ring in low degrees.
We now say something about what it known in high degrees. Since the dimension
of $\mathcal{C}_g^n$ is $3g-3+n$ there could, a priori, be nonzero tautological classes in degrees up
to $3g-3+n$. This is however far from the case as the following result of
Looijenga shows.

\begin{thm}[Looijenga \cite{looijenga2}]
\label{loovan}
$R^j(\mathcal{C}_g^n)=0$ if $j > g+n-2$ and $R^{g+n-2}(\mathcal{C}_g^n)$
is at most one-dimensional.
\end{thm}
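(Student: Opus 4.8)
The plan is to prove both assertions at once, for all $n$, by producing enough tautological relations to collapse $R^{\bullet}(\mathcal{C}_g^n)$ above degree $g+n-2$. The decisive geometric inputs are that the Hodge bundle $\mathbb{E}$ has rank exactly $g$ and that the relative $g$-th symmetric product of $\mathcal{C}_g$ is birational to a relative Picard scheme with $g$-dimensional fibres.

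First I would record the relations coming from $\mathbb{E}$. Evaluating an abelian differential at the marked point gives a morphism of vector bundles $\pi^{*}\mathbb{E} \to \omega_{\pi}$ on $\mathcal{C}_g$, and it is surjective since $\omega_C$ is globally generated for $g \geq 2$. Hence its kernel $\mathcal{K}$ is locally free of rank $g-1$, so $c_j(\mathcal{K}) = 0$ for all $j \geq g$. Expanding $c(\mathcal{K}) = c(\pi^{*}\mathbb{E}) \cdot c(\omega_{\pi})^{-1}$ and extracting the degree-$m$ part gives, for every $m \geq g$,
\begin{equation*}
\sum_{i=0}^{m} (-1)^{m-i}\, \lambda_i\, K^{m-i} = 0 \qquad \text{in } A^{m}(\mathcal{C}_g).
\end{equation*}
Pulling back along the forgetful morphisms $\pi_{n,i}$ yields the same relation for each $K_i$ on $\mathcal{C}_g^n$, so every power $K_i^{m}$ with $m \geq g$ becomes a combination of lower powers times $\lambda$-classes.

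Next I would move to the fibre product and incorporate the diagonals. These are constrained by the standard identities $D_{i,j}^{2} = -K_i\, D_{i,j}$ (self-intersection of a section) and $K_i\, D_{i,j} = K_j\, D_{i,j}$, together with the triple relations among $D_{i,j}, D_{i,k}, D_{j,k}$, so products of diagonals collapse to single diagonals carrying powers of $K$. Combining these with the $\mathbb{E}$-relations, one rewrites every tautological monomial, modulo terms of lower total degree, as a class pulled back from $\mathcal{M}_g$ times a ``transverse'' contribution supported on the points. The task is then to show that after all reductions the pulled-back factor never requires degree exceeding $g-2$ while the point factor contributes at most $n$, which is exactly the bound $g+n-2$; the one-dimensionality of the top piece should follow by tracking the single surviving determinantal class.

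The crux, and the step I expect to be the main obstacle, is the unpointed case $n=0$: that $R^{j}(\mathcal{M}_g) = 0$ for $j > g-2$ with $R^{g-2}(\mathcal{M}_g)$ at most one-dimensional. The $\mathbb{E}$-relations by themselves are too weak here---they only cap the powers of a single $K$---so a genuinely new input is needed. I would use that the relative symmetric product $\mathcal{C}_g^{(g)}$ is birational, via Abel--Jacobi, to $\mathrm{Pic}^{g}(\mathcal{C}_g/\mathcal{M}_g)$, an abelian scheme of relative dimension $g$, and that the locus where $g$ points fail to impose independent conditions on $\omega_C$---equivalently, move in a pencil---is controlled by Brill--Noether theory. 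Pushing tautological classes through this $g$-dimensional model is what should force vanishing above $g-2$, with the generator of $R^{g-2}(\mathcal{M}_g)$ identified as the class of the hyperelliptic locus $\overline{\mathcal{H}}_g$. The delicate points I anticipate are verifying that the relevant degeneracy loci have the expected codimension and are generically reduced, so that the determinantal relations are as strong as the dimension count predicts, and controlling the diagonal contributions so that colliding points produce no spurious classes in top degree.
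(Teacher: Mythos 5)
You should first note that the paper does not prove this statement at all: it is quoted from Looijenga \cite{looijenga2}, so there is no internal argument to compare with, and your sketch has to stand on its own. Its first step is fine and standard: the evaluation sequence $0 \to \mathcal{K} \to \pi^{*}\mathbb{E} \to \omega_{\pi} \to 0$ is exact because $\omega_C$ is globally generated for $g\geq 2$, and $c_j(\mathcal{K})=0$ for $j\geq g$ does give the relations $\sum_{i=0}^{m}(-1)^{m-i}\lambda_i K^{m-i}=0$. But, as you yourself observe, these only trade a power of $K$ for $\lambda$-classes of the \emph{same} total degree; they never lower degree. Your bookkeeping claim that after reduction ``the pulled-back factor never requires degree exceeding $g-2$ while the point factor contributes at most $n$'' is false as stated: already for $n=1$ the top nonzero group is $R^{g-1}(\mathcal{C}_g)$, generated by $K^{g-1}$, whose point factor has degree $g-1$, not $1$. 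The bound $g+n-2$ is a statement about total degree and does not decompose factorwise in the way you describe.

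The genuine gap is the mechanism for killing the classes pulled back from $\mathcal{M}_g$ (the case $n=0$, which you correctly identify as the crux). Birationality of $\mathcal{C}_g^{(g)}$ with $\mathrm{Pic}^{g}$ gives essentially nothing here: birational maps do not transport Chow-ring relations, and your sketch never extracts a single concrete relation from that model. What Looijenga actually uses is the relative symmetric product $\mathcal{C}_g^{(d)}$ for $d\geq 2g-1$, where the Abel--Jacobi map is a $\mathbb{P}^{d-g}$-bundle over the relative Picard scheme; the projective-bundle formula then yields a monic relation of degree $d-g+1$ in the tautological divisor class, and transporting this back to $\mathcal{C}_g^n$ via the incidence correspondence, together with the diagonal identities of Lemma~\ref{hm1}, is what forces vanishing above degree $g+n-2$ and isolates the single surviving class in degree $g+n-2$ (supported on the hyperelliptic/Weierstrass locus, as you guessed). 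This is the same rank-count input as Theorem~\ref{faberthm2} in the paper, where $n\geq 2g-1$ makes $\mathbb{E}\to\mathbb{F}_n$ fiberwise injective and hence $c_j(\mathbb{F}_n-\mathbb{E})=0$ for $j\geq n-g+1$. Without that input, or a substitute for it, the argument does not close; the Brill--Noether and reducedness issues you flag are secondary to this missing engine.
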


Looijenga's theorem was improved a bit by Faber, \cite{faber2}.

\begin{thm}[Faber \cite{faber2}]
\label{fabernonvan}
The class $\kappa_{g-2}$ is non-zero in $R^{g-2}(\mathcal{M}_g)$.
\end{thm}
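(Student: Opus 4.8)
The plan is to exploit Looijenga's theorem (Theorem~\ref{loovan}): since $R^{g-2}(\mathcal{M}_g)$ is at most one-dimensional, the class $\kappa_{g-2}$ is non-zero as soon as it has non-zero degree against a single complete subvariety of complementary dimension. Thus it suffices to produce a proper subvariety $\iota\colon Z \hookrightarrow \mathcal{M}_g$ with $\dim Z = g-2$ and to show that $\int_Z \iota^*\kappa_{g-2} \neq 0$.

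First I would realise $Z$ by a smooth proper family $p\colon \mathcal{X} \to Z$ of genus-$g$ curves whose classifying map is $\iota$, so that $\mathcal{X} = Z \times_{\mathcal{M}_g} \mathcal{C}_g$. Writing $K = c_1(\omega_p)$ for the first Chern class of the relative dualizing sheaf of $p$, functoriality of the $\kappa$-classes gives $\iota^*\kappa_{g-2} = p_*(K^{g-1})$. By the projection formula the number we must compute collapses to a single top self-intersection on the total space,
\begin{equation*}
\int_Z \iota^*\kappa_{g-2} = \int_{\mathcal{X}} K^{g-1},
\end{equation*}
where $\mathcal{X}$ is a smooth projective variety of dimension $(g-2)+1 = g-1$. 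Everything now reduces to showing that the self-intersection of the relatively ample class $K = c_1(\omega_p)$ on $\mathcal{X}$ is strictly positive.

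The two genuinely hard points are geometric rather than formal. The first is the existence, for every $g$, of a complete $(g-2)$-dimensional subvariety of $\mathcal{M}_g$; this dimension is sharp, since $g-2$ is the largest dimension a complete subvariety of $\mathcal{M}_g$ can have. I would construct such a $Z$ by an explicit family --- for example by fixing an auxiliary curve together with a base-point-free linear system and forming a family of cyclic covers, iterating the construction to reach the full dimension $g-2$ while keeping the total space projective. The second, and main, obstacle is to guarantee $\int_{\mathcal{X}} K^{g-1} > 0$. Since $\omega_p$ is relatively ample it is positive along the fibres; the task is to arrange enough variation of moduli over the complete base $Z$ that the mixed and base-directed contributions cannot conspire to cancel it. I would control this by choosing the family so that $K$, or a suitable twist, is nef with strictly positive top power, reducing the positivity to a Chern-class computation via Grothendieck--Riemann--Roch on the chosen family. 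A non-zero value of the integral then forces $\kappa_{g-2} \neq 0$, and combined with Theorem~\ref{loovan} shows that $R^{g-2}(\mathcal{M}_g)$ is exactly one-dimensional, spanned by $\kappa_{g-2}$.
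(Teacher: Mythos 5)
Your formal reductions are fine as far as they go: \emph{if} a complete subvariety $\iota\colon Z\hookrightarrow\mathcal{M}_g$ of dimension $g-2$ existed, and \emph{if} the associated family $p\colon\mathcal{X}\to Z$ satisfied $\int_{\mathcal{X}}c_1(\omega_p)^{g-1}\neq 0$, then indeed $\iota^*\kappa_{g-2}\neq 0$ and hence $\kappa_{g-2}\neq 0$ (for this implication you do not even need Theorem~\ref{loovan}). The fatal problem is your first ``hard point'': the existence of a complete subvariety of $\mathcal{M}_g$ of dimension $g-2$ is a well-known \emph{open problem}, not an input you can construct. Diaz's theorem gives $g-2$ only as an upper bound on the dimension of a complete subvariety; whether that bound is ever attained for $g\geq 4$ is unknown, and the iterated cyclic-cover (Kodaira-type) constructions you invoke produce complete subvarieties whose dimension grows roughly logarithmically in $g$ --- nowhere near $g-2$. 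Your parenthetical ``this dimension is sharp'' conflates the upper bound with its attainment. The second hard point, positivity of $\int_{\mathcal{X}}K^{g-1}$, is likewise only asserted (relative ampleness along fibres does not by itself control the top self-intersection on the total space), but it is moot given the first gap.

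The proof cited by the paper (Faber \cite{faber2}) sidesteps complete subvarieties entirely by working on the compactification $\overline{\mathcal{M}}_g$ and using the class $\lambda_g\lambda_{g-1}$, which restricts to zero on the boundary $\partial\overline{\mathcal{M}}_g$, as a substitute for a complete subvariety of complementary dimension: because $\lambda_g\lambda_{g-1}$ annihilates all classes supported on the boundary, the assignment $x\mapsto\int_{\overline{\mathcal{M}}_g}\bar{x}\,\lambda_g\lambda_{g-1}$ is independent of the chosen lift $\bar{x}$ of $x\in R^{g-2}(\mathcal{M}_g)$ and so descends to a linear evaluation on $R^{g-2}(\mathcal{M}_g)$. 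One then computes, using Mumford's Grothendieck--Riemann--Roch expression for the Chern character of the Hodge bundle, the Hodge integral
\begin{equation*}
\int_{\overline{\mathcal{M}}_g}\kappa_{g-2}\,\lambda_{g-1}\lambda_g=\frac{1}{2^{2g-1}(2g-1)!!}\cdot\frac{|B_{2g}|}{2g}\neq 0,
\end{equation*}
which forces $\kappa_{g-2}\neq 0$. If you want to rescue your strategy, you must replace ``complete subvariety of dimension $g-2$'' by ``class of codimension $g-1$ on $\overline{\mathcal{M}}_g$ vanishing on the boundary,'' which is exactly the role $\lambda_g\lambda_{g-1}$ plays; as written, your argument rests on an unproved (and possibly false) existence statement.
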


It follows that $R^{g-2}(\mathcal{M}_g)$ is one-dimensional. 
The non-vanishing of $R^{g+n-2}(\mathcal{C}_g^n)$ extends
easily to positive $n$.

\begin{cor}
\label{fabcor1}
$R^{g+n-2}(\mathcal{C}_ g^n)$ is one-dimensional.
\end{cor}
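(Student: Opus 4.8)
The plan is to combine the upper bound coming from Looijenga's Theorem~\ref{loovan} with an inductive non-vanishing argument that propagates Faber's Theorem~\ref{fabernonvan} from $\mathcal{M}_g$ up the tower of fiber powers. Since Theorem~\ref{loovan} already guarantees that $R^{g+n-2}(\mathcal{C}_g^n)$ is at most one-dimensional, it suffices to exhibit a single non-zero tautological class in this degree.

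I would argue by induction on $n$. The base case $n=0$ is precisely the statement that $R^{g-2}(\mathcal{M}_g)$ is one-dimensional, which follows from Theorem~\ref{loovan} together with Theorem~\ref{fabernonvan} as already noted, with $\kappa_{g-2}$ serving as a non-zero generator. For the inductive step, assume $R^{g+n-3}(\mathcal{C}_g^{n-1})$ is one-dimensional and fix a non-zero generator $\beta$. Consider the forgetful morphism $\pi_{n,n} \colon \mathcal{C}_g^n \to \mathcal{C}_g^{n-1}$ and the class
\begin{equation*}
\alpha = K_n \cdot \pi_{n,n}^*\beta \in R^{g+n-2}(\mathcal{C}_g^n),
\end{equation*}
which is tautological because $K_n$ is tautological and because the tautological ring is stable under pullback along the forgetful maps.

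The crux is the pushforward of $\alpha$. By the projection formula,
\begin{equation*}
(\pi_{n,n})_*(\alpha) = (\pi_{n,n})_*(K_n) \cdot \beta,
\end{equation*}
and since $\pi_{n,n}$ is a family of smooth genus $g$ curves whose relative dualizing sheaf has fiberwise degree $2g-2$, one has $(\pi_{n,n})_*(K_n) = \kappa_0 = 2g-2$. As $g \geq 2$ this factor is non-zero, so $(\pi_{n,n})_*(\alpha) = (2g-2)\beta \neq 0$, which forces $\alpha \neq 0$. Hence $R^{g+n-2}(\mathcal{C}_g^n)$ is non-zero, and together with the upper bound it is exactly one-dimensional.

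I expect the only point requiring genuine care to be the identity $(\pi_{n,n})_*(K_n) = \kappa_0$, namely that pushing the relative canonical class forward along a single forgetful map returns the constant class $2g-2$ with no diagonal correction; this is the relative analogue of the defining relation $\kappa_0 = \pi_*(K) = 2g-2$, and it is exactly the mechanism that transports the non-vanishing from degree $g-2$ on $\mathcal{M}_g$ to degree $g+n-2$ on $\mathcal{C}_g^n$. Everything else in the argument is formal.
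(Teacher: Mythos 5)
Your proof is correct and is essentially the argument the paper intends: the upper bound is Theorem~\ref{loovan}, the base case is Theorem~\ref{fabernonvan}, and the inductive step transporting non-vanishing up the tower via $(\pi_{n,n})_*(K_n\cdot \pi_{n,n}^*\beta)=(2g-2)\beta$ is exactly identity (5) of Lemma~\ref{hm1} with $k=1$, which also settles the one point you flagged (no diagonal correction arises because $\omega_{\pi_{n,n}}$ is pulled back from the universal family, $\mathcal{C}_g^n$ being a fiber product of smooth curves with no stabilization involved). The paper leaves this extension to positive $n$ as "easy," and your write-up is a complete and faithful filling-in of that step.
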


In the case of $\mathcal{M}_g$, Faber \cite{faber} also conjectured 
explicit proportionalities in degree $g-2$. This conjecture was proven,
first by Liu and Xu \cite{liuxu} and later by Buryak and Shadrin \cite{buryakshadrin}.
We also mention that a proof, conditional on the Virasoro conjecture for $\mathbb{P}^2$,
had previously been given by Getzler and Pandharipande \cite{getzlerpandharipande}.
A proof of the Virasoro conjecture for $\mathbb{P}^n$ was in turn announced by Givental, 
see \cite{givental1} and \cite{givental2}, although the details never seem to have appeared.
By now, Teleman \cite{teleman} has given a proof of the Virasoro conjecture for manifolds
with semi-simple quantum cohomology.

To state the result we need some notation. Let $\overline{d}=(d_1,\ldots,d_k)$ be a partition of $g-2$ into positive integers.
Let $\sigma \in S_k$ and let $\sigma=\alpha_1 \cdots \alpha_{\nu(\sigma)}$ be a decomposition of $\sigma$
into disjoint cycles. For a cycle $\alpha$ we write $|\alpha(\overline{d})|$ to denote the sum
\begin{equation*}
 |\alpha(\overline{d})| = \sum_{i \in \alpha} d_i
\end{equation*}
and we write $\kappa_{\sigma}(\overline{d})$ to denote the product
\begin{equation*}
 \kappa_{\sigma}(\overline{d}) = \prod_{i=1}^{\nu(\sigma)} \kappa_{|\alpha_i(\overline{d})|}.
\end{equation*}

\begin{thm}[Liu and Xu \cite{liuxu}]
\label{liuxuthm}
 Let $\overline{d}=(d_1,\ldots,d_k)$ be a partition of $g-2$ into positive integers. Then
\begin{equation*}
\sum_{\sigma \in \mathfrak{S}_n} \kappa_{\sigma}(\overline{d}) =
\frac{(2g-3+k)!(2g-1)!!}{(2g-1)!\prod_{j=1}^k(2d_j+1)!!} \kappa_{g-2}.
\end{equation*}
\end{thm}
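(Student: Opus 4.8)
The plan is to move the statement, which lives in the one-dimensional group $R^{g-2}(\mathcal{M}_g)$, to an intersection-number computation on a compactified moduli space and then feed it into the Witten--Kontsevich description of $\psi$-class integrals. First I would record that, by Theorem~\ref{loovan} together with Faber's non-vanishing Theorem~\ref{fabernonvan}, the group $R^{g-2}(\mathcal{M}_g)$ is one-dimensional and spanned by $\kappa_{g-2}$. Consequently every monomial $\kappa_{d_1}\cdots\kappa_{d_k}$ with $\sum_j d_j=g-2$, and hence the entire symmetrized sum on the left, is automatically a rational multiple of $\kappa_{g-2}$; the whole content of the theorem is the value of this one scalar, so the problem reduces to pinning down a single proportionality constant.

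Second, I would realize the left-hand side as the pushforward of a pure $\psi$-monomial. Let $p\colon \overline{\mathcal{M}}_{g,k}\to\overline{\mathcal{M}}_g$ be the morphism forgetting all $k$ marked points, and let $\psi_1,\ldots,\psi_k$ be the cotangent line classes. The Arbarello--Cornalba / Kaufmann--Manin--Zagier pushforward formula expresses $p_*\big(\psi_1^{d_1+1}\cdots\psi_k^{d_k+1}\big)$ exactly as the cycle-type sum $\sum_\sigma \kappa_\sigma(\overline{d})$ appearing on the left (one checks this against the paper's conventions in the case $k=2$, where it reads $\kappa_{d_1}\kappa_{d_2}+\kappa_{g-2}$), up to boundary corrections that restrict trivially to $\mathcal{M}_g$. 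Thus the identity to be proved becomes
\begin{equation*}
p_*\big(\psi_1^{d_1+1}\cdots\psi_k^{d_k+1}\big) = \frac{(2g-3+k)!\,(2g-1)!!}{(2g-1)!\,\prod_{j=1}^k(2d_j+1)!!}\,\kappa_{g-2}
\end{equation*}
inside $R^{g-2}(\mathcal{M}_g)$.

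Third, to extract the scalar I would pair both sides against a fixed linear functional on $R^{g-2}(\mathcal{M}_g)$ that is nonzero on $\kappa_{g-2}$. The natural choice is multiplication by the Hodge class $\lambda_g\lambda_{g-1}$ followed by integration over the compactification: since $\lambda_g\lambda_{g-1}$ vanishes on the boundary strata that do not meet the compact-type locus, this is a well-defined functional detecting the socle. By the projection formula, applying it turns the problem into the evaluation of the Hodge integral $\int_{\overline{\mathcal{M}}_{g,k}}\psi_1^{d_1+1}\cdots\psi_k^{d_k+1}\,\lambda_g\lambda_{g-1}$, whose ratio against the degenerate case $k=1$, $d_1=g-2$ (where $p_*\psi_1^{g-1}=\kappa_{g-2}$) must reproduce the claimed constant; a short check using $(2g-1)!!=(2g-1)(2g-3)!!$ confirms that this ratio is arithmetically consistent.

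Finally, the remaining integral is computed through the Witten--Kontsevich theorem. Using Mumford's Grothendieck--Riemann--Roch formula for the Chern character of the Hodge bundle to eliminate the $\lambda$-classes, one reduces to pure descendent integrals $\langle\tau_{a_1}\cdots\tau_{a_m}\rangle$, which are governed by the KdV hierarchy and the string and dilaton equations. I expect this last step to be the main obstacle: one must produce a sufficiently explicit closed form for the relevant $k$-point function and extract its top-genus coefficient, and it is precisely here that the double factorials $\prod_j(2d_j+1)!!$ and the factors $(2g-1)!!$, $(2g-1)!$ emerge from the recursion. The payoff is the closed evaluation
\begin{equation*}
\int_{\overline{\mathcal{M}}_{g,k}}\psi_1^{d_1+1}\cdots\psi_k^{d_k+1}\,\lambda_g\lambda_{g-1} = \frac{(2g-3+k)!\,|B_{2g}|}{2^{2g-1}(2g)!\,\prod_{j=1}^k(2d_j+1)!!},
\end{equation*}
whose ratio against the $k=1$ case yields exactly the stated constant and completes the proof.
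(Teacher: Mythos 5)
The paper itself offers no proof of this statement; it is imported verbatim from Liu and Xu \cite{liuxu}, so your proposal can only be measured against the known proofs of the Faber intersection number conjecture. Your strategic skeleton is the standard one and is correct as far as it goes: the reduction to a single scalar via Theorems \ref{loovan} and \ref{fabernonvan}, the identification of $\sum_{\sigma}\kappa_{\sigma}(\overline{d})$ with the pushforward of $\psi_1^{d_1+1}\cdots\psi_k^{d_k+1}$ via the Arbarello--Cornalba/Kaufmann--Manin--Zagier formula, and the extraction of the scalar by pairing against $\lambda_g\lambda_{g-1}$, which kills the boundary (so the pairing descends to $R^{g-2}(\mathcal{M}_g)$) and is nonzero on $\kappa_{g-2}$. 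Your consistency check that $(2g-1)!!/(2g-1)!=(2g-3)!!/(2g-2)!$ is also right.

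The genuine gap is the final step. The closed evaluation
\begin{equation*}
\int_{\overline{\mathcal{M}}_{g,k}}\psi_1^{d_1+1}\cdots\psi_k^{d_k+1}\,\lambda_g\lambda_{g-1}=\frac{(2g-3+k)!\,|B_{2g}|}{2^{2g-1}(2g)!\prod_{j=1}^k(2d_j+1)!!}
\end{equation*}
is not something one can wave at Witten--Kontsevich: by the very equivalence you set up (this is precisely the Getzler--Pandharipande reduction \cite{getzlerpandharipande}), this Hodge integral identity \emph{is} the theorem in an equivalent form, so asserting it as the ``payoff'' makes the argument circular --- you have reduced the statement to an equivalent unproven statement. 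All of the content of Liu and Xu's proof lives exactly at this point: eliminating the $\lambda$-classes by Mumford's Grothendieck--Riemann--Roch formula introduces boundary and lower-genus contributions (not just ``pure descendent integrals''), and one then needs an explicit closed form for the $k$-point function of $\psi$-intersection numbers together with a delicate induction to manufacture the factors $(2g-3+k)!$ and $\prod_j(2d_j+1)!!$; the alternative proofs (Buryak--Shadrin \cite{buryakshadrin}, or the Virasoro route) are comparably heavy. In short, your proposal is a correct framing of the problem plus an honest acknowledgement of where the difficulty sits, but the difficult part is asserted rather than proved.
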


Together, the results \ref{ionel1}, \ref{fabcor1} and \ref{liuxuthm} prove
two thirds of the Faber conjectures \cite{faber}. The remaining third, which asserts that the
pairing
\begin{equation}
\label{pairingeq}
R^i(\mathcal{M}_g) \times R^{g-2-i}(\mathcal{M}_g) \to R^{g-2}(\mathcal{M}_g)
\end{equation}
is perfect, remains open but it is reasonable to view the results of Petersen and Tommasi \cite{petersentommasi}
and of Petersen \cite{petersen} as evidence against the conjecture.

The following easy identities, formulated in this form by 
Harris and Mumford \cite{harrismumford}, will be fundamental for our computations.

\begin{lem}[Harris and Mumford \cite{harrismumford}]
\label{hm1}
The following identities hold in $R(\mathcal{C}_g^n)$:
\begin{align*}
& D_{i,n}D_{j,n} = D_{i,j}D_{i,n}, & i < j < n, \tag{1} \\
& D_{i,n}^2 = -K_{i}D_{i,n}, & i < n, \tag{2} \\
& K_nD_{i,n} = K_iD_{i,n}, & i < n. \tag{3}
\end{align*}
Using the above identities repeatedly, every monomial in the classes $K_i$ and $D_{ij}$
($i < j < n$) in $R(\mathcal{C}_g^n)$ can be rewritten as a monomial pulled 
back from $R(\mathcal{C}_g^{n-1})$ times either a single diagonal $D_{i,n}$ or
a power of $K_n$.

If $M$ is a monomial in $R(\mathcal{C}_g^n)$ which is pulled back
from $R(\mathcal{C}_g^{n-1})$, then
\begin{align*}
& \pi_{n*}(M \cdot D_{i,n}) = M, \tag{4} \\
& \pi_{n*}(M \cdot K_n^k)= M \cdot \pi_n^*(\kappa_{k-1}) = M \cdot \kappa_{k-1}. \tag{5}
\end{align*}
\end{lem}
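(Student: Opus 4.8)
The plan is to exploit the forgetful morphism $\pi_n := \pi_{n,n} : \mathcal{C}_g^n \to \mathcal{C}_g^{n-1}$, which realizes $\mathcal{C}_g^n$ as a smooth family of curves of relative dimension one over $\mathcal{C}_g^{n-1}$. For each $i < n$ there is a section $\sigma_i : \mathcal{C}_g^{n-1} \to \mathcal{C}_g^n$ sending $(C,p_1,\ldots,p_{n-1})$ to $(C,p_1,\ldots,p_{n-1},p_i)$; its image is exactly the diagonal $D_{i,n}$, and $\pi_n \circ \sigma_i = \mathrm{id}$. All three relations follow by pulling classes back along $\sigma_i$ and then pushing forward, using the projection formula together with the self-intersection formula $\sigma_i^* \sigma_{i*} \alpha = c_1(N) \cdot \alpha$, where $N = \sigma_i^* T_{\pi_n}$ is the normal bundle of the section.

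For identity (3) I would observe that on $D_{i,n}$ the marked points $p_i$ and $p_n$ coincide, so the cotangent lines at these points agree; concretely $\sigma_i^* K_n = \sigma_i^* K_i$, and pushing forward by $\sigma_{i*}$ and applying the projection formula gives $K_n D_{i,n} = K_i D_{i,n}$. For identity (2), since $\pi_n$ is smooth of relative dimension one the normal bundle of the section is $N = \sigma_i^* T_{\pi_n} = \sigma_i^* \omega_{\pi_n}^{\vee}$, so $c_1(N) = -\sigma_i^* K_n$; the self-intersection formula then yields $D_{i,n}^2 = \sigma_{i*}(-\sigma_i^* K_n) = -K_n D_{i,n}$, which identity (3) converts to $-K_i D_{i,n}$. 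For identity (1), restricting along $\sigma_i$ replaces the condition $p_j = p_n$ defining $D_{j,n}$ by the condition $p_j = p_i$, that is $\sigma_i^* D_{j,n} = \sigma_i^* D_{i,j}$; pushing forward and using the projection formula gives $D_{i,n} D_{j,n} = D_{i,n} D_{i,j}$.

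The reduction of an arbitrary monomial is then a purely formal consequence of (1)--(3). In any monomial I would collect the factors carrying the index $n$, namely powers of $K_n$ and diagonals $D_{i,n}$. Relation (1) merges two distinct $n$-diagonals $D_{i,n} D_{j,n}$ into a single one times the pulled-back class $D_{i,j}$; relation (2) removes a repeated factor $D_{i,n}^2$ at the cost of a pulled-back $K_i$; and relation (3) absorbs every $K_n$ into a pulled-back $K_i$ as soon as some $D_{i,n}$ is present. Inducting on the total degree of the $n$-indexed part, one is left with a class pulled back from $\mathcal{C}_g^{n-1}$ times either a single $D_{i,n}$ (in which case no $K_n$ survives) or a pure power $K_n^k$. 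Finally, for (4) and (5) I would apply the projection formula for $\pi_n$ with $M = \pi_n^* M'$: identity (4) reduces to $\pi_{n*} D_{i,n} = 1$, which holds because $D_{i,n}$ is the image of a section on which $\pi_n$ restricts to an isomorphism, while for (5) I would identify $\mathcal{C}_g^n$ with $\mathcal{C}_g^{n-1} \times_{\mathcal{M}_g} \mathcal{C}_g$, under which $K_n$ is the pullback of $K$, so that base change in the Cartesian square gives $\pi_{n*}(K_n^k) = \kappa_{k-1}$ from the defining relation $\kappa_{k-1} = \pi_*(K^k)$.

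The step requiring the most care is the geometric input behind (2): that the normal bundle of $\sigma_i$ is canonically $\sigma_i^* T_{\pi_n}$, hence the dual of the relative cotangent class, so that the sign in the self-intersection formula comes out exactly as stated. One must also check in (1) that the intersection is of the expected dimension, so that no excess-intersection correction appears; once the section $\sigma_i$ and the Cartesian description of $\pi_n$ are in place, everything else is routine bookkeeping with the projection formula and base change.
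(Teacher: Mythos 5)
Your argument is correct: the paper itself gives no proof of this lemma (it is quoted from Harris--Mumford), and your derivation via the sections $\sigma_i$ with image $D_{i,n}$, the identification $N_{\sigma_i}=\sigma_i^*T_{\pi_n}=\sigma_i^*\omega_{\pi_n}^{\vee}$, the projection formula, and base change in the Cartesian square $\mathcal{C}_g^n=\mathcal{C}_g^{n-1}\times_{\mathcal{M}_g}\mathcal{C}_g$ is exactly the standard proof. The only cosmetic caveat is that the reduction via (2) introduces signs, so ``monomial'' in the intermediate statement should be read as ``monomial up to sign,'' as in the original formulation.
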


\subsection{Faber's method}
\label{fabersec}

We shall now describe a method due to Faber \cite{faber} for producing relations
in the tautological ring of $\mathcal{M}_g$.

Consider the morphism $\pi_{n+1}:\mathcal{C}_g^{n+1} \to \mathcal{C}_g^n$
that forgets the $(n+1)$'st point and let $\Delta_{n+1}$ denote the sum
\begin{equation*}
\Delta_{n+1}=D_{1,n+1} + D_{2,n+1} + \cdots + D_{n,n+1}
\end{equation*}
Let $\omega_i$ be the line bundle on $\mathcal{C}_g^n$
obtained by pulling back $\omega_{\pi}$ along the projection 
$\pi_i: \mathcal{C}_g^{n} \to \mathcal{C}_g$ onto the $i$'th factor and 
define a coherent sheaf $\mathbb{F}_n$ on $\mathcal{C}_g^n$ by
\begin{equation*}
\mathbb{F}_n=\pi_{n+1*} \left(\mathcal{O}_{\Delta_{n+1}} \otimes \omega_{n+1}\right).
\end{equation*}
The sheaf $\mathbb{F}_n$ is locally free of rank $n$. 

\begin{thm}[Faber]
\label{faberthm2}
If $n \geq 2g-1$ and $j \geq n-g+1$, then $c_j(\mathbb{F}_n-\mathbb{E})=0$.
\end{thm}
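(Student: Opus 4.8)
The plan is to realize the virtual bundle $\mathbb{F}_n - \mathbb{E}$ as an honest vector bundle of rank $n-g$, after which the asserted vanishing is simply the statement that the Chern classes of a bundle vanish above its rank. The geometric input is that, over a point $[(C,p_1,\ldots,p_n)] \in \mathcal{C}_g^n$, the fibre of $\mathbb{F}_n$ is the space of differentials on $C$ truncated along the divisor $D = p_1 + \cdots + p_n$ cut out by $\Delta_{n+1}$ on the fibre $C$ of $\pi_{n+1}$, while the fibre of $\mathbb{E}$ is $H^0(C,\omega_C)$. Restricting global differentials to $D$ gives a natural map $\mathbb{E} \to \mathbb{F}_n$, and the hypothesis $n \geq 2g-1$ will force this map to be a fibrewise injection with locally free cokernel.

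To produce the map globally, I would start from the restriction sequence on $\mathcal{C}_g^{n+1}$,
\begin{equation*}
0 \to \omega_{n+1}(-\Delta_{n+1}) \to \omega_{n+1} \to \mathcal{O}_{\Delta_{n+1}} \otimes \omega_{n+1} \to 0,
\end{equation*}
and apply $\pi_{n+1*}$. The middle pushforward $\pi_{n+1*}\omega_{n+1}$ has fibre $H^0(C,\omega_C)$ and hence is the pullback of $\mathbb{E}$, the rightmost pushforward is $\mathbb{F}_n$ by definition, and the resulting long exact sequence supplies the desired morphism $\mathbb{E} \to \mathbb{F}_n$ together with control on its kernel and cokernel via $\pi_{n+1*}\omega_{n+1}(-\Delta_{n+1})$ and $R^1\pi_{n+1*}\omega_{n+1}(-\Delta_{n+1})$.

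The hypothesis enters as follows. On each fibre the line bundle $\omega_{n+1}(-\Delta_{n+1})$ restricts to $\omega_C(-D)$, which has degree $(2g-2)-n \leq -1 < 0$ once $n \geq 2g-1$, so $H^0(C,\omega_C(-D)) = 0$ identically. By cohomology and base change this forces $\pi_{n+1*}\omega_{n+1}(-\Delta_{n+1}) = 0$, whence $\mathbb{E} \to \mathbb{F}_n$ is injective, and moreover the induced map on fibres, namely the restriction $H^0(C,\omega_C) \to H^0(C,\omega_C|_D)$, is injective at every point. An injective morphism of vector bundles which is injective on each fibre is a subbundle inclusion, so the cokernel $\mathcal{Q}$ is locally free, necessarily of rank $n-g$. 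The Whitney formula then gives $c(\mathbb{F}_n) = c(\mathbb{E})\, c(\mathcal{Q})$, i.e. $c(\mathbb{F}_n - \mathbb{E}) = c(\mathcal{Q})$, and since $\mathcal{Q}$ has rank $n-g$ we obtain $c_j(\mathbb{F}_n - \mathbb{E}) = c_j(\mathcal{Q}) = 0$ for all $j > n-g$, that is, for $j \geq n-g+1$.

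The main obstacle is the verification that the cokernel is locally free of constant rank rather than merely a coherent sheaf; this is exactly the step that consumes the hypothesis $n \geq 2g-1$, through the fibrewise vanishing $H^0(C,\omega_C(-D)) = 0$ and the base-change theorem. Everything else reduces to the standard multiplicativity of total Chern classes in short exact sequences and the vanishing of $c_j$ above the rank.
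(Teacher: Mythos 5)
Your argument is correct and is essentially Faber's original proof, which the paper itself does not reproduce (the theorem is quoted from \cite{faber} without proof): the restriction sequence $0 \to \omega_{n+1}(-\Delta_{n+1}) \to \omega_{n+1} \to \mathcal{O}_{\Delta_{n+1}} \otimes \omega_{n+1} \to 0$ pushed forward along $\pi_{n+1}$, fibrewise injectivity of $\mathbb{E} \to \mathbb{F}_n$ from $\deg \omega_C(-D) < 0$ when $n \geq 2g-1$, a locally free cokernel of rank $n-g$, and Whitney's formula. All the steps, including the base-change justification and the identification of $c(\mathbb{F}_n - \mathbb{E})$ with $c(\mathcal{Q})$, are sound.
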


Thus, if $P \in R^{\bullet}(\mathcal{C}_g^n)$ is any element, then
$P \cdot c_j(\mathbb{F}_n-\mathbb{E}) = 0$ as long as $n \geq 2g-1$ and
$j \geq n-g+1$. Pushing this down to $\mathcal{M}_g$ gives a relation
in $R^{\bullet}(\mathcal{M}_g)$. This can be done by means of Lemma~\ref{hm1}
as soon as we understand $c_j(\mathbb{F}_n-\mathbb{E})$ in terms of tautological classes.
Faber proves that
\begin{equation*}
c(\mathbb{F}_d)=(1+K_1)(1+K_2-\Delta_2)(1+K_3-\Delta_3)\cdots(1+K_d-\Delta_d)
\end{equation*}
which together with Mumford's identity \cite{mumford}
\begin{equation*}
c(\mathbb{E})^{-1} = c(\mathbb{E}^{\vee}) = \sum_{i=0}^g (-1)^i \lambda_i =
1 - \lambda_1 + \lambda_2 - \lambda_3 + \cdots + (-1)^g \lambda_g
\end{equation*}
gives an expression of the desired form.
In this way one can compute a number of relations
in $R^i(\mathcal{M}_g)$ and thus obtain an upper bound of its dimension.

We shall now discuss how to obtain a lower bound for the dimension.
Recall that $\mathrm{deg}(\kappa_i)=i$ so
if $\kappa_I=\kappa_{i_1}^{n_1} \cdot \kappa_{i_2}^{n_2} \cdots \kappa_{i_r}^{n_r}$,
then
\begin{equation*}
 \deg (\kappa_I) = \sum_{j=1}^r n_j i_j.
\end{equation*}
Let $\kappa_I$ be a monomial in the $\kappa$-classes of degree $i$
and let $\kappa_J$ be a monomial in the $\kappa$-classes of degree
$j=g-2-i$. Then $\kappa_I \cdot \kappa_J$ is a monomial
of degree $g-2$. Since $R^{g-2}(\mathcal{M}_g)$ is one-dimensional and generated by $\kappa_{g-2}$,
we may express $\kappa_I \cdot \kappa_J$ as a rational 
multiple of $\kappa_{g-2}$, $\kappa_I \cdot \kappa_J = r \cdot \kappa_{g-2}$.
We therefore make the following definition.

\begin{definition}
\label{rdef}
 Let $\kappa_I$ be a monomial of degree $g-2$ in the $\kappa$-classes.
Define $r(\kappa_I)$ to be the rational number which satisfies
\begin{equation*}
\kappa_I = r(\kappa_I) \cdot \kappa_{g-2}.
\end{equation*}
We remark that Theorem \ref{liuxuthm} may be used to calculate
the numbers $r(\kappa_I)$.
\end{definition}

From this point on we fix a monomial ordering $<_{\kappa}$ of the monomials
in the $\kappa$-classes. Which one is of no importance so the
reader may think of his or her favourite.

Recall that the partition function, $p$, is the function which
for each nonnegative integer gives the number of ways of writing it
as an unordered sum of positive integers. For instance, $p(1)=1$,
$p(2)=2$, $p(3)=3$ and $p(4)=5$. Since it is not completely uncommon
to define the partition function only for positive integers, we point
out that $p(0)=1$ (the empty partition).

\begin{definition}
\label{pgi}
 Let $i \leq g-2$ be a non-negative integer. We define the
$p(i) \times p(g-2-i)$-matrix $P_{g,i}$ as follows. Let $\kappa_k$
be the $k$th monic monomial of degree $i$ and let $\kappa_l$ be the
$l$th monic monomial of degree $g-2-i$ (according to $<_{\kappa}$).
Then the $(k,l)$th entry of $P_{g,i}$ is $r(\kappa_{k} \cdot \kappa_{l})$.
We shall refer to matrices of this type as pairing matrices.
\end{definition}

The monomials $\kappa_I$, where $I$ is a multi-index such that
\begin{equation*}
 \sum_{i_r \in I} r \cdot i_r = i,
\end{equation*}
generate $R^i(\mathcal{M}_g)$ by Theorem \ref{mumfordthm1}.
Note that every $\mathbb{Q}$-linear relation among the monomials $\kappa_I$
of degree $i$ clearly gives a linear relation among the rows
of $P_{g,i}$. Hence, if the rank of $P_{g,i}$ is $n$, then $R^i(\mathcal{M}_g)$
has dimension at least $n$.

Faber's two-step program to compute $R^{\bullet}(\mathcal{M}_g)$ for specific values of $g$
is now the following.
First, compute the rank of $P_{g,i}$ to obtain a lower bound $n$
for the dimension of $R^i(\mathcal{M}_g)$. 
Then multiply the relation
in Theorem \ref{faberthm2} by monomials and use Lemma~\ref{hm1} to push
these relations to $R^{\bullet}(\mathcal{M}_g)$. We then pick out the degree $i$
part of the relation which must be a relation in $R^i(\mathcal{M}_g)$.
By producing such relations one obtains an ideal $I \subset \mathbb{Q}(\kappa_1, \ldots, \kappa_{g-2})$
and consequently an upper bound $m$ 
for the dimension of $R^i(\mathcal{M}_g)$ as the dimension of the degree $i$ part of the quotient $\mathbb{Q}(\kappa_1, \ldots, \kappa_{g-2})/I$.
If $m=n$ one may conclude that $R^i(\mathcal{M}_g) = \mathbb{Q}(\kappa_1, \ldots, \kappa_{g-2})/I$.
Faber \cite{faber} used this idea to 
compute $R^{\bullet}(\mathcal{M}_g)$ for small values of $g$.
It was this data that lead him to state his conjectures and it also lead
to the formulation of the Faber-Zagier relations, later generalized by Pixton \cite{pixton} 
and proven in this more general form by Pandharipande, Pixton and Zvonkine \cite{pandharipandepixtonzvonkine} 
in cohomology and by Janda, see \cite{janda} and \cite{janda2},  in Chow.
Today, we know that the Faber-Zagier relations are all relations for $g \leq 23$. For $g=24$
there is one ``missing'' relation in degree $11$, i.e. there is a difference of $1$ between the upper
bound and the lower bound in this degree.

\section{The universal curve}
\label{unisec}

The aim of this project was to adapt the technique of Faber, described
in the previous section, to $R^{\bullet}(\mathcal{C}_g)$. 
To do so we first note that we may stop pushing down at 
$R^{\bullet}(\mathcal{C}_g)$ instead of at $R^{\bullet}(\mathcal{M}_g)$. 
Thus, the method for generating relations extends to $R^{\bullet}(\mathcal{C}_g)$ 
without any trouble and we thus have a way to produce upper bounds for the dimension
of $R^i(\mathcal{C}_g)$. We thus turn to the problem of finding lower bounds.

\subsection{Pairing Matrices}
\label{pairing}

Recall the matrices $P_{g,i}$, introduced in Definition~\ref{pgi}.
There are corresponding matrices related to the product structure of
$R^{\bullet}(\mathcal{C}_g)$.
To define these matrices we need a bit of preparation. 

In $R^{\bullet}(\mathcal{C}_g)$ we only have one more generator than in $R^{\bullet}(\mathcal{M}_g)$, 
namely the class $K$. Hence, Theorem \ref{mumfordthm1} gives that
$R^{\bullet}(\mathcal{C}_g)$ is generated by the monomials in $\kappa_1, \ldots, \kappa_{g-2}$
and $K$. The class $K$ has degree $1$ so a monomial $M= K^j\kappa_1^{n_1} \cdots \kappa_{g-2}^{n_{g-2}}$
has degree
\begin{equation*}
 \deg (M) = j + \sum_{i=1}^{g-2} n_i \cdot i.
\end{equation*}
We extend the monomial ordering $<_{\kappa}$ on $R^{\bullet}(\mathcal{M}_g)$
to a monomial ordering $<_{*}$ on $R^{\bullet}(\mathcal{C}_g)$ as follows.

\begin{definition}
 Let $M=K^r \kappa_I$ and $N=K^s \kappa_J$ be monomials in the $\kappa$-classes
and $K$ of the same degree. We define a monomial ordering $<_{*}$ by
\begin{align*}
 & \text{(a) } M <_* N \text{ if } r < s \text{ or,} \\
 & \text{(b) } M <_* N \text{ if } r=s \text{ and } \kappa_I <_{\kappa} \kappa_J.
\end{align*}
\end{definition}

By Theorem \ref{loovan} and Corollary \ref{fabcor1} any
monomial $M$ of degree $g-1$ is a rational multiple of $K^{g-1}$, i.e.
$M = s(M) \cdot K^{g-1}$ for some rational number $s(M)$. 
By Lemma \ref{hm1} we have that 
$\pi_{*}(M) = s(M) \cdot \kappa_{g-2}$. 

\begin{definition}
 Let $M$ be the $k$'th monic monomial of degree $i$ according to $<_*$ and
let $N$ be the $l$th monic monomial of degree $g-1-i$ according to $<_*$.
Define $s^i_{k,l}$ as the rational number satisfying $\pi_{*}(M \cdot N) = s^i_{k,l} \kappa_{g-2}$ and let
\begin{equation*}
 Q_{g,i} = (s^i_{k,l}).
\end{equation*}
\end{definition}

The dimensions of $Q_{g,i}$ are
\begin{equation*}
 \left( \sum_{r=0}^i p(r) \right) \times \left( \sum_{r=0}^{g-1-i} p(r) \right),
\end{equation*}
where $p$ is the partition function.
Just as for the matrices $P_{g,i}$, the rank of $Q_{g,i}$ determines a lower
bound for the dimension of $R^i(\mathcal{C}_g)$.

To explain the relationship between the matrices $Q_{g,i}$ and $P_{g,i}$
in more detail it is convenient to introduce some notation.

\begin{definition}
Let $j \leq i$ be a positive integer. Define $P_{g,i}^j$ as the $p(i-j) \times p(g-2-i)$-submatrix of 
$P_{g,i}$ consisting of the rows of $P_{g,i}$ which are labeled by monomials $\kappa_I$
containing at least one factor $\kappa_j$. We also define
\begin{equation*}
P_{g,i}^0 = (2g-2)\cdot P_{g,i}
\end{equation*}
and
\begin{equation*}
P_{g,i}^{-1} = \text{ the zero matrix of size } p(i+1) \times p(g-2-i).
\end{equation*}
\end{definition} 

We are now ready to state the following Proposition.

\begin{prop}
\label{matrixprop1}
(a) Let $Q_{g,i}$ and $P_{g,j}^r$ be defined as above and let $i \geq 1$. Then,
\begin{equation*}
Q_{g,i} = \left( \begin{array} {cccccc}
P_{g,i-1}^{-1} & P_{g,i}^0 & P_{g,i+1}^1 & P_{g,i+2}^2 & \cdots & P_{g,g-2}^{g-2-i} \\
P_{g,i-1}^0 & P_{g,i}^1 & P_{g,i+1}^2 & \cdots & \cdots & \vdots \\
P_{g,i-1}^1 & P_{g,i}^2 & \ddots & & & \vdots \\
P_{g,i-1}^2 & \vdots & & \ddots & & \vdots \\
\vdots & \vdots & & & \ddots & \vdots \\
P_{g,i-1}^{i-1} & \cdots & \cdots & \cdots & \cdots & P_{g,g-2}^{g-2}
\end{array} \right).
\end{equation*} 

(b) The rank of $Q_{g,0}$ is $1$.
\end{prop}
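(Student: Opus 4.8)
The plan is to make the block structure of $Q_{g,i}$ visible by sorting its rows and columns according to the power of $K$ appearing in the indexing monomial, and then to evaluate the entries by a direct pushforward computation with Lemma~\ref{hm1}. First I would record the shape of the relevant monomials: a monic monomial of degree $i$ in $K$ and the $\kappa$-classes is uniquely $K^r\kappa_I$ with $0\le r\le i$ and $\kappa_I$ a monic $\kappa$-monomial of degree $i-r$, while a monic monomial of degree $g-1-i$ is $K^s\kappa_J$ with $0\le s\le g-1-i$ and $\deg\kappa_J=g-1-i-s$. Since the ordering $<_*$ sorts first by the exponent of $K$, grouping rows by $r$ and columns by $s$ cuts $Q_{g,i}$ into blocks indexed by pairs $(r,s)$, the $(r,s)$-block having $p(i-r)$ rows and $p(g-1-i-s)$ columns.

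Next I would compute one entry. As $\kappa_I\kappa_J$ is pulled back from $\mathcal{M}_g$, identity (5) of Lemma~\ref{hm1} (applied with $n=1$, so $K_1=K$) gives
\[
\pi_*\bigl(K^r\kappa_I\cdot K^s\kappa_J\bigr)=\pi_*\bigl(K^{r+s}\cdot\kappa_I\kappa_J\bigr)=\kappa_{r+s-1}\,\kappa_I\kappa_J ,
\]
so the matching entry of $Q_{g,i}$ equals $r\bigl(\kappa_{r+s-1}\kappa_I\kappa_J\bigr)$ in the notation of Definition~\ref{rdef}.

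The heart of part (a) is then to recognise the $(r,s)$-block. When $r+s-1\ge 1$ I would view $\kappa_{r+s-1}\kappa_I$ as a $\kappa$-monomial of degree $i+s-1$ carrying the factor $\kappa_{r+s-1}$; as $\kappa_I$ ranges over all monic $\kappa$-monomials of degree $i-r$, the product $\kappa_{r+s-1}\kappa_I$ ranges bijectively over all degree-$(i+s-1)$ monomials containing $\kappa_{r+s-1}$. Pairing with $\kappa_J$ of degree $(g-2)-(i+s-1)$ then identifies the block with $P_{g,i+s-1}^{r+s-1}$. The two boundary values of $r+s-1$ match the two special conventions exactly: $r+s-1=0$ forces the factor $\kappa_0=2g-2$, producing $(2g-2)P_{g,i+s-1}=P_{g,i+s-1}^0$, and $r+s-1=-1$, which occurs only at $(r,s)=(0,0)$, forces $\kappa_{-1}=0$ and hence the zero block $P_{g,i-1}^{-1}$. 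Writing out the row $r=0$ and the column $s=0$ then confirms that the blocks sit exactly as in the stated matrix. The one point requiring care, which I expect to be the only genuine obstacle, is that these are equalities of matrices and not merely equalities up to permutation: the columns match immediately, since column block $s$ is ordered by $<_\kappa$ on $\kappa_J$ just as are the columns of $P_{g,i+s-1}$, whereas for the rows I would invoke that $<_\kappa$ is a term order, so multiplication by the fixed monomial $\kappa_{r+s-1}$ preserves it and the $<_\kappa$-ordering of the $\kappa_I$ agrees with that of the $\kappa_{r+s-1}\kappa_I$.

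For part (b) I would argue directly, the case $i=0$ being excluded from (a) precisely because $P_{g,i-1}$ is undefined there. There is a unique monic monomial of degree $0$, namely $1$, so $Q_{g,0}$ consists of a single row and its rank is $0$ or $1$. For the column indexed by $N=K^{g-1}$ one has $\pi_*\bigl(1\cdot K^{g-1}\bigr)=\kappa_{g-2}$ by identity (5) of Lemma~\ref{hm1}, so this entry equals $1\neq 0$. Hence the row is non-zero and the rank is exactly $1$.
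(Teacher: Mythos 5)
Your proposal is correct and follows essentially the same route as the paper's proof: decompose $Q_{g,i}$ into blocks according to the $K$-exponents $(r,s)$, compute $\pi_*\bigl(K^{r+s}\kappa_I\kappa_J\bigr)=\kappa_{r+s-1}\kappa_I\kappa_J$ via Lemma~\ref{hm1}, and identify the $(r,s)$-block with $P_{g,i+s-1}^{r+s-1}$ (the boundary conventions for $r+s-1\in\{-1,0\}$ and the argument for (b) also coincide). Your explicit remark that the row identification requires $<_\kappa$ to be compatible with multiplication by $\kappa_{r+s-1}$ is a point the paper dispatches with the phrase ``by our choice of monomial order,'' so it is a welcome clarification rather than a divergence.
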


\begin{proof}
(a) Denote the monomial labeling the $r$th row of $Q_{g,i}$ by $N_r$ and the monomial
labeling the $s$th column of $Q_{g,i}$ by $N_s$.
Consider first the submatrix of $Q_{g,i}$ corresponding to rows and columns
labeled by monomials $N_r$ and $N_s$ not containing a factor $K$. Then 
$N_r \cdot N_s$ projects to $0$ so this submatrix consists entirely of
zeros. With the above notation, this submatrix is equal to $P_{g,i-1}^{-1}$.

Now consider a submatrix $C$ of $Q_{g,i}$ corresponding to rows and columns
labeled by monomials $N_r$ and $N_s$ such that ,
\begin{itemize}
\item[\emph{(i)}] $N_r=K^{n_r}N'_r$ and $N_s=K^{n_s}N'_s$ where $K$ does not divide $N'_r$ or $N'_s$ and,
\item[\emph{(ii)}] not both $n_r$ and $n_s$ are zero.
\end{itemize}
Then,
\begin{equation*}
\pi_{*}(N_r \cdot N_s) = \pi_{*}(K^{n_r+n_s} \cdot N'_r \cdot N'_s) = \kappa_{n_r+n_s-1} \cdot N'_r \cdot N'_s.
\end{equation*}
Note that $\kappa_{n_r+n_s-1} \cdot N'_r$ is a monomial in the $\kappa_i$'s of degree
$i+n_s-1$ containing a factor $\kappa_{n_r+n_s-1}$ and that $N'_s$ is a monomial
of degree $g-1-i-n_s=g-2-(i+n_s-1)$ in the $\kappa_i$'s. Further, every monomial
in the $\kappa_i$'s of degree $i+n_s-1$ containing a factor $\kappa_{n_r+n_s-1}$
is the image of some monomial $K^{n_r+n_s} \cdot N'_r$ and every
polynomial of degree $g-2-(i+n_s-1)$ is represented by the $N'_s$'s. By our choice
of monomial order labeling the rows and columns of $Q_{g,i}$ we now see that 
$C=P_{g,i+n_s-1}^{n_r+n_s-1}$. This completes the proof of (a).

(b) The only row of $Q_{g,0}$ is labeled by $1$. The last column of $Q_{g,0}$
is labeled by $K^{g-1}$. Hence, the last entry of 
$Q_{g,0}$ is $1$ and we conclude that the rank is one.
\end{proof}

The merit of Proposition~\ref{matrixprop1} is that it tells us how to compute
the matrices $Q_{g,i}$ without having to project monomials of $R^{\bullet}(\mathcal{C}_g)$
down to $R^{\bullet}(\mathcal{M}_g)$. Hence, we have reduced the problem of computing the matrices
$Q_{g,i}$ to computing the matrices $P_{g,i}$, which are smaller and easier to compute. 
We shall describe a rather efficient way of doing this shortly. However,
we first make a few observations which reduce the calculations a bit.

Firstly, let $M$ be the $k$th monic monomial of degree $i$ and let $N$ be the
$l$th monic monomial of degree $g-1-i$. 
By definition we have $\pi_{*}(M \cdot N)=s^i_{k,l}\cdot \kappa_{g-2}$ and 
$\pi_{*}(N \cdot M) = s^{g-1-i}_{l,k}\cdot \kappa_{g-2}$.
We thus see that $Q_{g,g-1-i}=Q_{g,i}^T$. Similarly, we have $P_{g,g-2-i}=P_{g,i}^T$.
Hence, we only have to compute $P_{g,i}$ for $i \leq \lceil (g-2)/2 \rceil$
and we only have to compute the rank of $Q_{g,i}$ for $i \leq \lceil (g-1)/2\rceil$.

Secondly, Theorem~\ref{bmwthm} states that there are no relations in
degrees less than $g/3$. In other words, the matrices $Q_{g,i}$ have full
rank for $i < \lfloor g/3\rfloor$.
Thus, what needs to be computed is the rank
of $Q_{g,i}$ for $\lfloor g/3 \rfloor < i \leq \lceil g/2 \rceil$. This is done
by means of Proposition \ref{matrixprop1} and the following
algorithm of Liu and Xu \cite{liuxu2}.

\subsection{Computing pairing matrices}
\label{computingpgi}

In this section we describe an algorithm due to Liu and Xu \cite{liuxu2}
by means of which one may efficiently compute the matrices $P_{g,i}$.

Let $\textbf{m} = (m_1, m_1, \cdots)$ be a sequence of non-negative
integers with only finitely many of the $m_i$ nonzero. 
The set of such sequences is a monoid under coordinatewise addition.
Define
\begin{equation*}
|\textbf{m}| = \sum_{i=1}^{\infty} i \cdot m_i, \quad
||\textbf{m}|| = \sum_{i=1}^{\infty} m_i, \quad
\textbf{m}! = \prod_{i=1}^{\infty} m_i!.
\end{equation*}
A sequence $\textbf{m}$ determines a monomial $\kappa_{\textbf{m}}$ as
\begin{equation*}
\kappa_{\textbf{m}} = \prod_{m_i \in \textbf{m}} \kappa_i^{m_i}
\end{equation*}
Inductively define constants $\beta_{\textbf{m}}$ by setting $\beta_{\textbf{0}}=1$ and requiring
\begin{equation*}
\sum_{\textbf{m}' + \textbf{m}''=\textbf{m}} \frac{(-1)^{||\textbf{m}'||}\beta_{\textbf{m}'}}{\textbf{m}''!(2|\textbf{m}''|+1)!!} = 0 \quad \text{when } \textbf{m} \neq \textbf{0}
\end{equation*}
and constants $\gamma_{\textbf{m}}$ as
\begin{equation*}
\gamma_{\textbf{m}} = \frac{(-1)^{||\textbf{m}||}}{\textbf{m}!(2|\textbf{m}|+1)!!}.
\end{equation*}
The constants $\beta_{\textbf{m}}$ and $\gamma_{\textbf{m}}$ can be used to
define new constants $C_{\textbf{m}}$
\begin{equation*}
C_{\textbf{m}} = \sum_{\textbf{m}'+\textbf{m}''=\textbf{m}} 2|\textbf{m}'|\beta_{\textbf{m}'} \gamma_{\textbf{m}''}.
\end{equation*}
Now let $|\textbf{m}| \leq g-2$ and define further constants $F_g(\textbf{m})$
via
\begin{equation*}
|\textbf{m}| \cdot F_g(\textbf{m}) = (g-1) \cdot \sum_{\substack{\textbf{m}'+\textbf{m}''= \textbf{m} \\ \textbf{m}' \neq 0}} C_{\textbf{m}'} F_g(\textbf{m}''),
\end{equation*}
and let $F_g(\textbf{0})=1$.
We can now state the following result of Liu and Xu \cite{liuxu2}.

\begin{thm}[Liu and Xu \cite{liuxu2}]
\label{liuxumatrix}
Let $|\textbf{m}| =g-2$ and let $r(\kappa_{\textbf{m}})$ be as defined
in Definition \ref{rdef}. Then $r(\kappa_{\textbf{m}})$ is given by
\begin{equation*}
r(\kappa_{\textbf{m}}) = \frac{(2g-3)!! \cdot \textbf{m}!}{2g-2} \cdot F_g(\textbf{m}).
\end{equation*}
\end{thm}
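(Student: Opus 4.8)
The plan is to prove Theorem~\ref{liuxumatrix} by showing that the combinatorially defined constants $F_g(\textbf{m})$ are exactly the rescaled proportionality constants $r(\kappa_{\textbf{m}})$, using Theorem~\ref{liuxuthm} as the source of the actual intersection numbers and the recursions for $\beta_{\textbf{m}}$, $\gamma_{\textbf{m}}$, $C_{\textbf{m}}$ as bookkeeping for a generating-function manipulation. First I would package the numbers $r(\kappa_{\textbf{m}})$ into a generating series: for a sequence $\textbf{m}$ with $|\textbf{m}|=g-2$, the quantity
\begin{equation*}
r(\kappa_{\textbf{m}}) = \frac{(2g-3)!!\cdot \textbf{m}!}{2g-2}\cdot F_g(\textbf{m})
\end{equation*}
should be recognized as the coefficient extraction from a product of formal series in variables $t_1,t_2,\dots$, one for each $\kappa_i$. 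The factor $1/(\textbf{m}!(2|\textbf{m}|+1)!!)$ defining $\gamma_{\textbf{m}}$ is precisely the shape of the right-hand side denominator $\prod_j (2d_j+1)!!$ in Theorem~\ref{liuxuthm}, so the strategy is to interpret Theorem~\ref{liuxuthm} as a statement that a certain ``disjoint-cycle'' sum over the symmetric group equals a single $\kappa_{g-2}$ with an explicit scalar, and then invert this relation.

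Second, I would make precise the meaning of the auxiliary constants. The $\gamma_{\textbf{m}}$ encode the single-cycle (connected) contributions coming from the factor $(2g-1)!!/((2g-1)!\prod_j(2d_j+1)!!)$ in Theorem~\ref{liuxuthm}, while the defining recursion for $\beta_{\textbf{m}}$,
\begin{equation*}
\sum_{\textbf{m}'+\textbf{m}''=\textbf{m}} \frac{(-1)^{||\textbf{m}'||}\beta_{\textbf{m}'}}{\textbf{m}''!(2|\textbf{m}''|+1)!!}=0,
\end{equation*}
says exactly that $\beta_{\textbf{m}}$ is the multiplicative (convolution) inverse of $\gamma_{\textbf{m}}$ with respect to the monoid convolution on sequences. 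Thus $\beta$ and $\gamma$ are inverse power series, $C_{\textbf{m}}$ is their ``logarithmic derivative'' weighted by $2|\textbf{m}'|$ (the role of $|\alpha(\overline{d})|$ in the cycle notation), and $F_g(\textbf{m})$ is obtained from $C_{\textbf{m}}$ by the linear recursion $|\textbf{m}|F_g(\textbf{m}) = (g-1)\sum_{\textbf{m}'\neq 0} C_{\textbf{m}'}F_g(\textbf{m}'')$. I would verify that this recursion is the Euler/degree recursion satisfied by the exponential of the series whose coefficients are the $C_{\textbf{m}}$, so that $F_g$ is (up to the stated normalization) that exponential evaluated at $\textbf{m}$.

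Third, I would close the loop: show that the exponential built from the single-cycle data $C$ reproduces the full symmetric-group sum $\sum_{\sigma}\kappa_{\sigma}(\overline{d})$ of Theorem~\ref{liuxuthm}, since passing from connected (single-cycle) contributions to all of $S_k$ is exactly exponentiation of a generating function (the exponential formula). Reading off the coefficient of $\kappa_{\textbf{m}}$ on both sides, and matching the explicit factorial prefactor $(2g-3+k)!(2g-1)!!/((2g-1)!\prod_j(2d_j+1)!!)$ against the product of $(2g-3)!!\cdot \textbf{m}!/(2g-2)$ and $F_g(\textbf{m})$, yields the claimed formula after simplifying double factorials. The main obstacle I expect is the bookkeeping in this last matching step: one must track the $(g-1)$ and $(2g-3+k)$ factors carefully through the exponential formula so that the degree-graded recursion defining $F_g$ aligns with the $k$-dependence of the prefactor in Theorem~\ref{liuxuthm}. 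Establishing that the linear recursion for $F_g$ is the correct ``de-exponentiated'' form of Theorem~\ref{liuxuthm}, rather than a routine rearrangement, is where the real content lies; the combinatorial identities among $\beta$, $\gamma$, and $C$ are then purely formal consequences of power-series inversion and the exponential formula.
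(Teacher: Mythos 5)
The paper gives no proof of this statement: Theorem~\ref{liuxumatrix} is quoted verbatim as a result of Liu and Xu \cite{liuxu2}, and the surrounding text only illustrates how to \emph{use} the recursion (Example~\ref{liuxuex}), so there is no argument in the paper against which to measure your proposal. Liu and Xu's own derivation, for what it is worth, does not proceed by inverting Theorem~\ref{liuxuthm}; it goes through the expression of $\kappa$-monomials as pushforwards of $\psi$-monomials and the evaluation of $\psi$-integrals against $\lambda_g\lambda_{g-1}$ (their proof of the Faber intersection number conjecture and the $n$-point function), which is where the double factorials and the convolution-inverse pair $\beta,\gamma$ actually originate.

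Your route --- treating Theorem~\ref{liuxuthm} as a unitriangular linear system on the $r(\kappa_{\textbf{m}})$ (the identity permutation contributes $\kappa_{\overline d}$ with coefficient $1$, all other terms are strictly coarser partitions) and verifying that the stated formula solves it --- is viable in principle, and several of your structural observations are correct: the defining recursion does say exactly that $\beta$ and $\gamma$ are convolution inverses, $C$ is then $2E\log(\gamma^{-1})$ for the Euler operator $E$, and the recursion $|\textbf{m}|F_g(\textbf{m})=(g-1)\sum C_{\textbf{m}'}F_g(\textbf{m}'')$ integrates to $F_g=\gamma^{-(2g-2)}$. But the proposal has a genuine gap where you yourself locate ``the real content'': nothing is done to show that this power of $\gamma^{-1}$, with the prefactor $(2g-3)!!\,\textbf{m}!/(2g-2)$, actually satisfies the $S_k$-sum identity of Theorem~\ref{liuxuthm}. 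Moreover, the one concrete identification you offer as the bridge is wrong as stated: the denominator $(2|\textbf{m}|+1)!!$ in $\gamma_{\textbf{m}}$ is the double factorial of the \emph{total} weight, whereas $\prod_j(2d_j+1)!!$ in Theorem~\ref{liuxuthm} is a product over the individual parts (for $\kappa_1^2$ these are $15$ and $9$), so $\gamma$ cannot be read off as ``the single-cycle contribution'' to that formula. Similarly, the $k$-dependent factor $(2g-3+k)!/(2g-1)!$ is a rising factorial attached to the number of parts, not to cycles, so the passage from Theorem~\ref{liuxuthm} to a product/exponential over cycles is not the plain exponential formula and would need a genuine argument (or a different starting point, as in Liu--Xu's actual proof). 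As it stands the proposal is a plausible outline, not a proof.
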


Theorem~\ref{liuxumatrix} gives a very efficient method to compute $P_{g,i}$.
The method is especially nice if one wants to compute many different $P_{g,i}$,
since much of the work can be reused, so the theorem suits our purposes very well.

Since the definitions are somewhat involved it might be helpful to see an example
in order to decipher them.

\pagebreak[2]
\begin{example}
\label{liuxuex}
Let $g=4$ and consider $r(\kappa_{(2,0,\cdots)})$.
First take $\textbf{m}=(1,0,0,\cdots)$. Then
\begin{align*}
 0 & = \frac{(-1)^{||\textbf{0}||}\beta_{\textbf{0}}}{(1,0,0, \cdots)! (2|(1,0,0, \cdots)|+1)!!} +
 \frac{(-1)^{||(1,0,0,\cdots)||}\beta_{(1,0,0, \cdots)}}{\textbf{0}!(2|\textbf{0}|+1)!!} = \\
 & = \frac{1 \cdot 1}{1\cdot (2 \cdot 1 + 1)!!} - \frac{\beta_{(1,0,0,\cdots)}}{1\cdot(2 \cdot 0+1)!!} = \\
 & = \frac{1}{3} - \beta_{(1,0,0,\cdots)}.
\end{align*}
Hence, $\beta_{(1,0,0,\cdots)}=\frac{1}{3}$. 
A similar computation for $\textbf{m}=(2,0,0, \cdots)$ gives $\beta_{(2,0,\cdots)}=\frac{7}{90}$.
We also compute $\gamma_{\textbf{0}}=1$,
$\gamma_{(1,0,\cdots)}=\frac{1}{3}$ and $\gamma_{(2,0,\cdots)} = \frac{1}{30}$.
We continue by computing the $C_{\textbf{m}}$. For instance we get
\begin{equation*}
 C_{(1,0,\cdots)} = 2 |(1,0,\cdots)|\beta_{(1,0,\cdots)}\gamma_{\textbf{0}} + 2|\textbf{0}|\beta_{\textbf{0}}\gamma_{(1,0,\cdots)}=2 \cdot 1 \cdot \frac{1}{3} \cdot 1 = \frac{2}{3}.
\end{equation*}
A similar computation gives $C_{(2,0,\cdots)} = \frac{4}{45}$.

Up to this point, the computations are valid for all $g \geq 2$. 
However, $F_g(\textbf{m})$ depends on $g$, which in our case is $4$. We get
\begin{align*}
 |(1,0,\cdots)|F_4((1,0,\cdots)) = (4-1)\cdot \frac{2}{3} \cdot 1
\end{align*}
so $F_4((1,0,\cdots))=2$. A similar computation gives that $F_4((2,0,\cdots))=\frac{32}{15}$.
Lemma \ref{liuxumatrix} now gives that
\begin{equation*}
 r(\kappa_{(2,0,\cdots)}) = \frac{(2\cdot 4-3)!! \cdot (2,0,\cdots)!}{2 \cdot 4-2} \cdot \frac{32}{15} = \frac{15 \cdot 2}{6} \cdot \frac{32}{15} = \frac{32}{3}.
\end{equation*}
Since $\kappa_{(2,0,\cdots)}=\kappa_1^2$, this is another way of expressing that
in $R^2(\mathcal{M}_4)$, the relation
\begin{equation*}
 \kappa_1^2 = \frac{32}{3} \cdot \kappa_2
\end{equation*}
holds. This relation can also be found in \cite{faber}.
\end{example}

\subsection{Ranks of pairing matrices}
\label{rankof}

Using Proposition~\ref{matrixprop1} and Theorem~\ref{liuxumatrix}
we have constructed a Maple\footnote{Maple\copyright is a trademark of Waterloo Maple Inc.} 
program for computing the rank
of $Q_{g,i}$. The results for $g \leq 27$ are shown in Table~\ref{ranktable}
below.

Write $g=3k-l-1$ with $k$ a positive integer and $l$ a non-negative integer.
In \cite{faber}, Faber remarked that the computational evidence suggests
that the dimension of the degree $k$ part of the kernel 
of the homomorphism
\begin{equation*}
 \varphi: \mathbb{Q}[x_1, \ldots, x_{g-2}] \to R^{\bullet}(\mathcal{M}_g)
\end{equation*}
sending $x_i$ to $\kappa_i$
only depends on
$l$ as long as $2k \leq g-2$. Under this assumption, $a(l)$ is defined
to be $\mathrm{dim}(\mathrm{ker}(\varphi)_{k})$. The numbers $a(l)$
has been computed in \cite{faber} for $0 \leq l \leq 9$. This has
later been extended to $l \leq 14$ in \cite{liuxu2}. We show the
results for $0 \leq l \leq 11$ in Table \ref{atable}.

Faber and Zagier have guessed that $a(l)$ equals the number of partitions 
of $l$ without any parts other than $2$ which are congruent to  $2$ modulo $3$.
The guess is supported by the following (see also \cite{faber3}). Let $\textbf{p}=\{p_1,p_3,p_4,p_6,p_7,p_8,p_9, \ldots\}$
be a collection of variables indexed by the positive integers not congruent
to $2$ modulo $3$. Define
\begin{equation*}
 \Psi(t,\textbf{p}) = \sum_{i=0}^{\infty}t^ip_{3i} \sum_{j=0}^{\infty} \frac{(6j)!}{(3j)!(2j)!}t^j +
\sum_{i=0}^{\infty}t^ip_{3i+1} \sum_{j=0}^{\infty} \frac{(6j)!}{(3j)!(2j)!}\frac{6j+1}{6j-1}t^j,
\end{equation*}
where we take $p_0 = 1$. Let $\sigma=(\alpha_1,0,\alpha_3,\alpha_4,0,\alpha_6 \ldots)$
be a sequence of non-negative integers with all coordinates with indices congruent to $2$ modulo $5$
equal to zero. Define
\begin{equation*}
 \textbf{p}^{\sigma} = p_1^{\alpha_1}p_3^{\alpha_3}p_4^{\alpha_4}\cdots.
\end{equation*}
Define constants $C_r(\sigma)$ via
\begin{equation*}
 \log \left( \Psi(t,\textbf{p}) \right) = \sum_{\sigma} \sum_{r=0}^{\infty} C_r(\sigma)t^r\textbf{p}^{\sigma}.
\end{equation*}
We use these constants to define
\begin{equation*}
 \gamma = \sum_{\sigma} \sum_{i=0}^{\infty} C_r(\sigma) \kappa_r t^r \textbf{p}^{\sigma}.
\end{equation*}
It was shown by Faber and Zagier that the relation
\begin{equation*}
 \left[ \exp \left( -\gamma \right) \right]_{t^r\textbf{p}^{\sigma}} = 0,
\end{equation*}
holds in the Gorenstein quotient of $R^{\bullet}(\mathcal{M}_g)$ when
$g-1+|\sigma|<3r$ and $g \equiv r+|\sigma|+1 \mod 2$. These are the so-called
FZ-relations. It has been shown by Pandharipande and Pixton, see \cite{pandharipandepixton} and \cite{pandharipandepixton2},
that these relations also hold in $R^{\bullet}(\mathcal{M}_g)$. These relations
are sufficiently many for codimensions $\leq \lfloor (g-2)/2 \rfloor$,
but it is not clear whether these relations are linearly independent or
not. Note the central role of positive integers not congruent to $2$ modulo
$3$ in the above - this has now been explained in terms of $3$-spin structures, 
see \cite{pandharipandepixtonzvonkine}.

With the above in mind, it might be interesting to investigate whether a
similar behaviour can be observed in $R^{\bullet}(\mathcal{C}_g)$. 
We therefore introduce the homomorphism
\begin{equation*}
 \hat{\varphi} : \mathbb{Q}[x_1, \ldots, x_{g-2},y] \to R^{\bullet}(\mathcal{C}_g)
\end{equation*}
sending $x_i$ to $\kappa_i$ and $y$ to $K$ and note that
note that the expected dimension of the degree $k$ part of $\mathrm{dim}(\mathrm{ker}(\hat{\varphi}))$
is given through the formula
\begin{equation*}
n = \sum_{i=0}^k p(i) - \text{rank}(Q_{g,k}).
\end{equation*}
Here $p(i)$ is the partition function extended with $p(0)=1$.
The computations for $l \leq 9$ suggested that the number $n$
is a function of $l$ only, as long as $2k \leq g-1$, but for $l\geq 10$ this pattern does not
persist.
Nevertheless, we shall momentarily pretend that $n$ is a function
of $l$. 
We show the computations of $n$ for $0 \leq l \leq 11$ in Table \ref{btable}.

Using Table \ref{btable}, a formula $b(l)$
for $n$ as a function of $l$ was guessed by Faber
\begin{equation}
\label{bequation}
b(l) = \sum_{\substack{i=0\\i \not\equiv 2 \text{ (mod 3)}}}^l a(l-i),
\end{equation}
where $a$ is the $a$-function discussed above. As is easily shown by induction, $b(l)$
satisfies the following recursive formula
\begin{equation*}
b(l) = 2\sum_{i=0}^{l-1}a(i) + a(l) - b(l-1)-b(l-2), \quad l \geq 2,
\end{equation*}
with initial values $b(0)=a(0)$ and $b(1)=a(0)+a(1)$.

Our guess $b(l)$, gives the right number of relations $n$ when $0 \leq l \leq 9$ but it
gives the value $b(10)=90$ instead of the value $n=91$ which
was obtained by computing the rank of $Q_{25,12}$. To investigate
the matter further I computed the rank of $Q_{28,13}$ and
$Q_{31,14}$. Both computations gave the predicted value $n=b(10)=90$
which suggests that $Q_{25,12}$ is exceptional. Noteworthy is
that the anomaly occurs in the middle degree, $(g-1)/2$.

The above results suggest that $n$ may exhibit a similar
behaviour in the middle degree also for $g > 25$. If this
is so, we expect an anomaly for $g=27$, $k=13$. The rank of $Q_{27,13}$ gives
$n=120$ while $b(11)=119$. Computing
the rank of $Q_{30,14}$ again yields the predicted value, $n=b(11)=119$.

One way to avoid this anomaly would be to require
$2k \leq g-2$ instead of $2k \leq g-1$, although
this is not very appealing (and very ad hoc). It might be interesting to
recall that the method of Faber has been unsuccessful in
proving the Faber conjectures in $R^{11}(\mathcal{M}_{24})$. Note that
also here the problem arises in the middle degree.

\begin{table}
\resizebox{0.95\textwidth}{!}{$
\begin{tabular}{ccccccccccccccccccccccccccccc}
$g$ $\setminus$ $i$ & \vline & 0 & 1 & 2 & 3 & 4 & 5 & 6 & 7 & 8 & 9 & 10 & 11 & 12 & 13 & 14 & 15 & 16 & 17 & 18 & 19 & 20 & 21 & 22 & 23 & 24 & 25 & 26 \\
\hline
2 &  \vline & 1 & 1 \\
3 &  \vline & 1 & 2 & 1 \\
4 &  \vline & 1 & 2 & 2 & 1 \\
5 &  \vline & 1 & 2 & 3 & 2 & 1 \\
6 &  \vline & 1 & 2 & 4 & 4 & 2 & 1 \\
7 &  \vline & 1 & 2 & 4 & 5 & 4 & 2 & 1 \\
8 &  \vline & 1 & 2 & 4 & 6 & 6 & 4 & 2 & 1 \\
9 &  \vline & 1 & 2 & 4 & 7 & 9 & 7 & 4 & 2 & 1 \\
10 & \vline & 1 & 2 & 4 & 7 & 10 & 10 & 7 & 4 & 2 & 1 \\
11 & \vline & 1 & 2 & 4 & 7 & 11 & 13 & 11 & 7 & 4 & 2 & 1 \\
12 & \vline & 1 & 2 & 4 & 7 & 12 & 16 & 16 & 12 & 7 & 4 & 2 & 1 \\
13 & \vline & 1 & 2 & 4 & 7 & 12 & 17 & 20 & 17 & 12 & 7 & 4 & 2 & 1 \\
14 & \vline & 1 & 2 & 4 & 7 & 12 & 18 & 24 & 24 & 18 & 12 & 7 & 4 & 2 & 1 \\
15 & \vline & 1 & 2 & 4 & 7 & 12 & 19 & 27 & 31 & 27 & 19 & 12 & 7 & 4 & 2 & 1 \\
16 & \vline & 1 & 2 & 4 & 7 & 12 & 19 & 28 & 35 & 35 & 28 & 19 & 12 & 7 & 4 & 2 & 1 \\
17 & \vline & 1 & 2 & 4 & 7 & 12 & 19 & 29 & 39 & 45 & 39 & 29 & 19 & 12 & 7 & 4 & 2 & 1 \\
18 & \vline & 1 & 2 & 4 & 7 & 12 & 19 & 30 & 42 & 53 & 53 & 42 & 30 & 19 & 12 & 7 & 4 & 2 & 1 \\
19 & \vline & 1 & 2 & 4 & 7 & 12 & 19 & 30 & 43 & 57 & 64 & 57 & 43 & 30 & 19 & 12 & 7 & 4 & 2 & 1 \\
20 & \vline & 1 & 2 & 4 & 7 & 12 & 19 & 30 & 44 & 61 & 75 & 75 & 61 & 44 & 30 & 19 & 12 & 7 & 4 & 2 & 1  \\
21 & \vline & 1 & 2 & 4 & 7 & 12 & 19 & 30 & 45 & 64 & 83 & 94 & 83 & 64 & 45 & 30 & 19 & 12 & 7 & 4 & 2 & 1 \\
22 & \vline & 1& 2& 4& 7& 12& 19& 30& 45& 65& 87& 106& 106& 87& 65& 45& 30& 19& 12& 7& 4& 2& 1 \\
23 & \vline & 1& 2& 4& 7& 12& 19& 30& 45& 66& 91& 117& 131& 117& 91& 66& 45& 30& 19& 12& 7& 4& 2& 1 \\
24 & \vline & 1& 2& 4& 7& 12& 19& 30& 45& 67& 94& 125& 150& 150& 125& 94& 67& 45& 30& 19& 12& 7& 4& 2& 1 \\
25 & \vline & 1& 2& 4& 7& 12& 19& 30& 45& 67& 95& 129& 162& 181& 162& 129& 95& 67& 45& 30& 19& 12& 7& 4& 2& 1 \\
26 & \vline & 1& 2& 4& 7& 12& 19& 30& 45& 67& 96& 133& 173& 208& 208& 173& 133& 96& 67& 45& 30& 19& 12& 7& 4& 2& 1 \\
27 & \vline & 1& 2& 4& 7& 12& 19& 30& 45& 67& 97& 136& 181& 227& 253& 227& 181& 136& 97& 67& 45& 30& 19& 12& 7& 4& 2& 1
\end{tabular}
$}
\caption{The rank of $Q_{g,i}$ for $2 \leq g \leq 27$ and $0 \leq i \leq 26$.}
\label{ranktable}
\end{table}

\renewcommand{\arraystretch}{0.3}
\renewcommand{\tabcolsep}{0.2cm}
\begin{table}
\begin{tabular}{c|cccccccccccc}
$l$  & 0 & 1 & 2 & 3 & 4 & 5 & 6 & 7 & 8 & 9 & 10 & 11 \\ \hline \\
$a(l)$ & 1 & 1 & 2 & 3 & 5 & 6 & 10 & 13 & 18 & 24 & 33 & 41 
\end{tabular}
\caption{The $a$-function for $0 \leq l \leq 11$. The values 
for $l \leq 9$ can be found in \cite{faber} while $a(10)$ and $a(11)$ are found in
\cite{liuxu2}.}
\label{atable}
\end{table}

\renewcommand{\arraystretch}{0.3}
\renewcommand{\tabcolsep}{0.2cm}
\begin{table}
\begin{tabular}{c|cccccccccc|c|cc}
$l$  & 0 & 1 & 2 & 3 & 4 & 5 & 6 & 7 & 8 & 9 & 10 & 11 \\ \hline \\
$n$ & 1 & 2 & 3 & 6 & 10 & 14 & 22 & 33 & 45 & 64  & 90 \, (91) & 119 \, (120)\\ \hline \\
\# & 8 & 7 & 6 & 6 & 5 & 4 & 4 & 3 & 2 & 2  & 2 \, (1) & 1 \, (1)
\end{tabular}
\caption{$n$ for $0 \leq l \leq 11$. \# is the number of
$g$ for which $n$ has been computed. The numbers in parentheses
are values for which the expected behaviour fails along with how many times that happened
for each $l$.}
\label{btable}
\end{table}

\subsection{Generating Relations}
\label{generating}

We earlier described a method for generating relations. Even though the method is rather easy
in principle, its computational complexity is quite an obstacle.
We shall therefore discuss a few tricks which have helped to make the computations
more efficient.

The first step of the algorithm is to pick a monomial $M$ in $R^{\bullet}(\mathcal{C}_g^{2g-1})$ 
in the $K$ and $D_{i,j}$-classes. However, the set of all such polynomials
is much too large already for low $g$. The computations so far suggest that
the algorithm described below produces enough relations.

Suppose that we want to produce relations in $R^i(\mathcal{C}_g)$ by
multiplying the relation $c_j(\mathbb{F}_{2g-1}-\mathbb{E}) \in R^j(\mathcal{C}_g^{2g-1})$ 
by a monomial $M$ and then pushing down. Since the degree drops by $2g-2$
and since $c_j(\mathbb{F}_{2g-1}-\mathbb{E})$ has degree $j$,
the degree $d$ of the monomial must be $d=i+2g-2-j$. Choose $q=2g+2i-2j+1$ and
define monomials in the following way.
\begin{itemize}
\item[(a)] $\,$ Define $M_0=D_{1,2}D_{1,3} \cdots D_{1,q}D_{q+1,q+2}D_{q+3,q+4} \cdots D_{2g-2,2g-1}$,
\item[(b)] $\,$ for $r=0,1,\ldots,q-3$, replace $D_{1,q-r}$ by $D_{q-r,q-r+1}$ in $M_r$ to obtain $M_{r+1}$.
\end{itemize}
Each $M_r$ is a monomial of degree $i+2g-2-j$ and $M_rc_j(\mathbb{F}_{2g-1}-\mathbb{E})$
will thus give a relation in $R^i(\mathcal{C}_g)$ when pushed down.

The second step is to calculate $M \cdot c_j (\mathbb{F}_{2g-1} - \mathbb{E})$
for suitable choices of $j$. As stated earlier, we have
\begin{equation*}
c(\mathbb{F}_{2g-1}) = (1+K_1)(1+K_2-\Delta_2)(1+K_3-\Delta_3) \cdots (1+K_{2g-1}-\Delta_{2g-1}),
\end{equation*}
and
\begin{equation*}
c(\mathbb{E})^{-1} = \sum_{i=0}^g (-1)^i \lambda_i.
\end{equation*}
Hence
\begin{align*}
c(\mathbb{F}_{2g-1}-\mathbb{E}) & = c_0(\mathbb{F}_{2g-1}) + c_1(\mathbb{F}_{2g-1}) -\lambda_1c_0(\mathbb{F}_{2g-1}) + c_2(\mathbb{F}_{2g-1}) - \\ 
 &-\lambda_1c_1(\mathbb{F}_{2g-1}) + \lambda_2c_0(\mathbb{F}_{2g-1}) + \cdots
\end{align*}
If we identify the degree $k$ part we obtain the formula
\begin{equation*}
c_k(\mathbb{F}_{2g-1}-\mathbb{E}) = \sum_{i=0}^{k} (-1)^i \lambda_ic_{k-i}(\mathbb{F}_{2g-1}). \tag{1}
\end{equation*}
As pointed out in \cite{faber}, we have
\begin{equation*}
c_k(\mathbb{F}_n)=c_k(\mathbb{F}_{n-1}) + (K_n-\Delta_n)c_{k-1}(\mathbb{F}_{n-1}).
\end{equation*}
No term of $c_j(\mathbb{F}_{n-1})$ has a factor $K_n$ or $D_{i,n}$.
Hence, if $P$ is a polynomial in $K_i$ and $D_{i,j}$ then,
$\pi_{n*}(P \cdot c_j(\mathbb{F}_{n-1}))=\pi_{n*}(P) \cdot c_j(\mathbb{F}_{n-1})$.
By putting the pieces together we obtain
\begin{equation*}
\pi_{n*}(M c_k(\mathbb{F}_n)) =
\pi_{n*}(M)c_k(\mathbb{F}_{n-1}) + \pi_{n,n*}(M (K_n-\Delta_n))c_{k-1}(\mathbb{F}_{n-1}). \tag{2}
\end{equation*}
Using formulas (1) and (2), the computations become more manageable.

Several Maple procedures has been written for performing these computations.
These procedures has then been used to find the necessary number of relations
for $2 \leq g \leq 9$. In other words, we have the following.

\begin{thm}
\label{gorthm}
 The tautological ring $R^{\bullet}(\mathcal{C}_g)$ is Gorenstein for $2 \leq g \leq 9$.
\end{thm}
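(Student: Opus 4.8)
The plan is to reduce the Gorenstein property to an equality between the ranks of the pairing matrices $Q_{g,i}$ and the dimensions $\dim R^i(\mathcal{C}_g)$, and to verify this equality computationally for each $g$ with $2 \le g \le 9$.

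First I would pin down the socle degree. By Theorem~\ref{loovan} and Corollary~\ref{fabcor1} we have $R^j(\mathcal{C}_g)=0$ for $j>g-1$ and $R^{g-1}(\mathcal{C}_g)\cong\mathbb{Q}$, while $R^0(\mathcal{C}_g)=\mathbb{Q}$. Thus $R^{\bullet}(\mathcal{C}_g)$ is a graded Artinian $\mathbb{Q}$-algebra with one-dimensional top piece in degree $g-1$, and it is Gorenstein exactly when the pairing
\begin{equation*}
R^i(\mathcal{C}_g) \times R^{g-1-i}(\mathcal{C}_g) \to R^{g-1}(\mathcal{C}_g) \cong \mathbb{Q}
\end{equation*}
is perfect for every $0 \le i \le g-1$. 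The matrix $Q_{g,i}$ is precisely the Gram matrix of this pairing relative to the monomial spanning sets of $R^i(\mathcal{C}_g)$ and $R^{g-1-i}(\mathcal{C}_g)$, so its rank equals the rank of the pairing. Hence the pairing is perfect if and only if $\mathrm{rank}(Q_{g,i})=\dim R^i(\mathcal{C}_g)=\dim R^{g-1-i}(\mathcal{C}_g)$.

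The two dimensions are then bracketed from below and above. Since the pairing factors through $R^i(\mathcal{C}_g)$, every linear relation among the monomial generators descends to a relation among the rows of $Q_{g,i}$, giving $\dim R^i(\mathcal{C}_g)\ge\mathrm{rank}(Q_{g,i})$; the ranks themselves I would compute through the block decomposition of Proposition~\ref{matrixprop1} together with the Liu--Xu evaluation of the entries $r(\kappa_{\mathbf{m}})$ in Theorem~\ref{liuxumatrix}, producing Table~\ref{ranktable}. For the matching upper bound I would run Faber's construction of Theorem~\ref{faberthm2}: multiply $c_j(\mathbb{F}_{2g-1}-\mathbb{E})$ by the explicit monomials $M_0,\dots,M_{q-2}$ of Section~\ref{generating}, push each product down to $R^{\bullet}(\mathcal{C}_g)$ by repeated use of Lemma~\ref{hm1} (encoded in formulas (1) and (2) there), and record the degree-$i$ components. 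These relations cut out an ideal whose degree-$i$ codimension bounds $\dim R^i(\mathcal{C}_g)$ from above.

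The proof is completed by checking, for each $g$ in range, that this upper bound meets the lower bound in every degree. For $i\le\lfloor g/3\rfloor$ there are no relations at all (Theorem~\ref{bmwthm}), so $\dim R^i(\mathcal{C}_g)$ equals the number of monomial generators and $Q_{g,i}$ has full rank; in the remaining degrees one needs the ideal of Faber relations to have codimension exactly $\mathrm{rank}(Q_{g,i})$. The symmetry $Q_{g,g-1-i}=Q_{g,i}^T$ halves the rank computations. Whenever both bounds coincide in degrees $i$ and $g-1-i$ we obtain $\dim R^i(\mathcal{C}_g)=\mathrm{rank}(Q_{g,i})=\dim R^{g-1-i}(\mathcal{C}_g)$, so the shared pairing matrix has full rank equal to both dimensions and the pairing is perfect. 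I expect the main obstacle to be computational rather than conceptual: the family of monomials one could feed into Faber's method is astronomically large even for modest $g$, so the real content is that the explicitly chosen monomials $M_0,\dots,M_{q-2}$ already generate \emph{enough} relations, in every degree, to force the ideal to the correct codimension. Verifying this match, carried out with the Maple procedures of Section~\ref{generating}, is where the work lies.
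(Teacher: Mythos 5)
Your proposal follows essentially the same route as the paper: lower bounds from the ranks of the pairing matrices $Q_{g,i}$ (computed via Proposition~\ref{matrixprop1} and Theorem~\ref{liuxumatrix}), upper bounds from Faber-type relations obtained by multiplying $c_j(\mathbb{F}_{2g-1}-\mathbb{E})$ by the explicit monomials of Section~\ref{generating} and pushing down via Lemma~\ref{hm1}, and a computational check that the bounds coincide for $2 \le g \le 9$, which forces the pairing into $R^{g-1}(\mathcal{C}_g)$ to be perfect. This matches the paper's argument, including the correct identification of the socle degree $g-1$ and the role of $Q_{g,i}$ as the Gram matrix of the pairing.
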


No higher genera have been attempted since the computations
are expected to take unfeasibly long time. However, shortly after our results first appeared,
Yin \cite{yin} was able to prove that $R^{\bullet}(\mathcal{C}_g)$ is Gorenstein for $g$ up to $19$
using completely different methods. Below, we present
the relations for $g=2$, $3$ and $4$. The other relations, as well as
the Maple code, are available from the author upon request.

\subsubsection*{The case $g=2$}

Since $\kappa_{0}=2g-2=2$, there should be no relation in degree zero.
In degree one there should be one relation. Multiplying $c_2(\mathbb{F}_3-\mathbb{E})$
by $D_{2,3}$ and pushing down to $R^*(\mathcal{C}_2)$ yields the relation
$\frac{5}{3} \kappa_1 = 0$. Hence, $K \neq 0$ and $\kappa_1=0$.
This is no surprise, since $\kappa_1$ is the pullback of $\kappa_1$ in
$R^*{\mathcal{M}_g}$, which is zero by \cite{faber}. The result
also follows from Theorem~\ref{loovan} and Theorem~\ref{fabernonvan}.

\subsubsection*{The case $g=3$}

Since $g/3=1$ we should have no relations in degrees zero and one. In degree
two we should have three relations (and will have, by Theorems~\ref{loovan} and~\ref{fabernonvan}). 
Multiplying $c_3(\mathbb{F}_5-\mathbb{E})$
with $D_{1,2}D_{1,3}D_{4,5}$ respectively $D_{1,2}D_{3,4}D_{4,5}$ and
pushing down to $R^*(\mathcal{C}_3)$ yields the relations
\begin{equation*}
42K^2-\frac{21}{2}K\kappa_1+\frac{7}{48}\kappa_1^2=0, \quad 
126K^2-\frac{63}{2}K\kappa_1+\frac{41}{48}\kappa_1^2-6\kappa_2=0.
\end{equation*}
Multiplying $c_4(\mathbb{F}_{5}-\mathbb{E})$ with $D_{2,3}D_{4,5}$ and pushing
down yields the relation
\begin{equation*}
56K^2-14K\kappa_1+\frac{47}{12}\kappa_1^2-20\kappa_2=0.
\end{equation*}
These three relations are linearly independent, so we are done. If
we solve the equations we see that
\begin{equation*}
\kappa_1^2=\kappa_2=0, \quad \text{and} \quad K \kappa_1 = 4 K^2.
\end{equation*}

\subsubsection*{The case $g=4$}

We expect to find two relations in degree $2$ and six in degree $3$.
Multiplying $c_4(\mathbb{F}_7-\mathbb{E})$ with $D_{1,2}D_{1,3}D_{4,5}D_{6,7}$
respectively $D_{1,2}D_{3,4}D_{4,5}D_{6,7}$ and pushing down
yields the relations
\begin{equation*}
420K^2-70K\kappa_1+\frac{115}{6}\kappa_1^2-150\kappa_2=0, \quad  
120K^2-20K\kappa_1+\frac{10}{3}\kappa_1^2-20\kappa_2=0.
\end{equation*}
These relations are linearly independent so we are done in degree
$2$. We solve the equations to obtain
\begin{equation*}
\kappa_1^2 = \frac{32}{3} \kappa_2, \quad \text{and} \quad K\kappa_1 = 6 K^2 + \frac{7}{9} \kappa_2.
\end{equation*}
Note that the first of these relations is the relation we obtained
in $R(\mathcal{M}^2_4)$ in Example \ref{liuxuex}.

In degree $3$ we have the six linearly independent relations which can
be written as
\begin{equation*}
\kappa_3= \kappa_2 \kappa_1 = \kappa_1^3 = 0, \quad K_1^2 \kappa_1 = \frac{32}{3} K_1^3, \quad K_1 \kappa_1^2 = 64 K_1^3, \quad K_1\kappa_1 = 6 K_1^3.
\end{equation*}

\bibliographystyle{acm}

\renewcommand{\bibname}{References} 

\bibliography{references} 
\end{document}